\documentclass[11pt,a4paper]{article}

\usepackage[T1]{fontenc}
\usepackage[utf8]{inputenc}
\usepackage{amsmath,amssymb,amsthm}
\usepackage{graphicx}
\usepackage{algorithm}
\usepackage{algpseudocode}
\usepackage{booktabs}
\usepackage{longtable}
\usepackage{caption}
\usepackage[margin=2.6cm]{geometry}
\usepackage{setspace}
\usepackage[authoryear,round]{natbib}
\usepackage{hyperref}
\usepackage{xcolor}
\usepackage{lmodern}
\newcommand{\epivar}{\mu}

\hypersetup{colorlinks=true,linkcolor=blue!50!black,citecolor=blue!50!black,
            urlcolor=blue!50!black}

\newtheorem{proposition}{Proposition}
\newtheorem{lemma}{Lemma}

\theoremstyle{definition}
\newtheorem{remark}{Remark}

\newcommand{\wmax}{V}

\title{\bf EndoWake: Modeling Wind by Linear Programming\\
       for Wind Farm Layout Optimization}

\author{%
Martina Fischetti\thanks{Department of Statistics and Operations Research
and Institute of Mathematics (IMUS), University of Seville, Facultad de
Matem\'aticas, Av.~Reina Mercedes s/n, 41012 Sevilla, Spain.
ORCID: 0000-0002-7673-6917.
\emph{Corresponding author}: \texttt{mfischetti@us.es}}
\and
Matteo Fischetti\thanks{Department of Information Engineering, University of
Padova, Via Gradenigo 6/B, 35131 Padova, Italy.
ORCID: 0000-0001-6601-0568}}

\begin{document}
\maketitle

\begin{abstract}
The Wind Farm Layout Optimization problem consists of placing a given
number of turbines within a given area so as to maximize energy
production. One of its main challenges is the wake effect between
turbines, which can strongly affect production and is a cumbersome
function to evaluate and to embed in an optimization framework. In this
paper we introduce a new endogenous wake model (EndoWake), based on the
idea of treating the wind speed at every grid point as a decision
variable, linked to its upwind neighbors through linear propagation
constraints. This makes it possible to incorporate the wake equations
directly into a Mixed-Integer Linear Programming (MILP) model. We then
develop such a MILP model and, in particular, introduce a new class of
valid ``band'' inequalities, showing that they substantially strengthen
the root bound. Building on this, we design a branch-and-check scheme:
the master problem retains only the site variables together with one
hypograph variable per cell and scenario, integer solutions are
evaluated by an exact linear-time oracle, and the optimality cuts are
problem-specific ``upwind cuts''. Computational results are reported on
a ladder of square grids under a four-sector axial wind rose,
demonstrating the effectiveness of the proposed improvements.
\end{abstract}

\noindent\textbf{Keywords:} Discrete optimization; OR in energy; wind
farm layout optimization; wake modeling; valid inequalities; branch-and-check.

\section{Introduction}
\label{sec:intro}

The \emph{Wind Farm Layout Optimization} (WFLO) problem asks where to place
a given number of wind turbines inside an admissible region so as to
maximize energy production. Turbines interfere: a machine operating in the
wake of another sees a reduced wind speed and produces less power, and since
power grows with the cube of wind speed in the sub-rated regime, even
moderate wake losses translate into substantial revenue losses over the
twenty-five years of a typical concession---millions of euros are involved
in these analyses~\citep{fischetti2020vattenfall,fischetti2023integrated}.
The classical mathematical-programming approaches
discretize the region into candidate sites, evaluate an analytical wake
model offline for every ordered pair of sites, and optimize over the
resulting \emph{interference matrix} of pairwise coefficients
\citep{donovan2005,turner2014,zhang2014,fischetti2016proximity},
multiple-wake deficits included \citep{ulku2019}; a recent
variant keeps a wind-speed variable per site but still feeds it by
precomputed pairwise deficits~\citep{perezrua2023}. We introduce a new
endogenous wake model: the wind speed at
every grid point is a continuous decision variable, linked to its upwind
neighbors by linear propagation constraints, and a turbine reduces
multiplicatively the speed it actually sees; we call this wind-flow
model \emph{EndoWake}. The model has two attractive features: it fits a
Mixed-Integer Linear Programming (MILP) framework directly, and no pairwise
superposition rule has to be chosen, since higher-order interactions follow
from the propagation itself and the flow representation is linear
rather than quadratic in the number of grid points, at a fixed stencil
support and number of scenarios.

Wake models that dispense with a
fixed superposition rule, and let each machine respond to the inflow
shaped by the machines upwind of it, are not new
in the engineering literature
\citep{zong2020,bastankhah2021,lanzilao2022,zhang2026}; what is new here is that
the propagation is written \emph{inside} the optimization model, as
linear constraints on decision variables, so that the same
mechanism serves the relaxation, the valid inequalities and the exact
evaluation of a layout.

Solving the MILP model built on EndoWake \emph{exactly} turned out to be
far from trivial. Finding a feasible solution (an incumbent) is seldom
the difficulty: a greedy construction followed by local search produces
good layouts quickly. Obtaining a good dual bound is. We prove in
Section~\ref{sec:vacuous} that the linear relaxation of the formulation
as written yields the worst bound the instance admits, and that standard
local repairs are not effective. We then introduce a new family of valid
inequalities, the band inequalities, that significantly improve the root
bound, and we develop a decomposition scheme to solve the instances.

To summarize, the main contributions of our work are as follows.

\begin{itemize}
\item \emph{A new, endogenous formulation.} We introduce a MILP model of
wind farm layout in which the flow field is part of the formulation: one
wind-speed variable per cell and scenario, tied to its upwind neighbors
by linear propagation constraints, so that wake aggregation and
higher-order interference follow from the model instead of a
superposition rule fixed in advance, and the formulation grows linearly
rather than quadratically with the number of cells.

\item \emph{The linear relaxation is weak, and local repairs provably fail.}
We construct a uniform fractional point that survives every constraint of
the linear relaxation, so the dual bound
it certifies carries no information. Two local repairs fail as well: a maximally
lifted wake constraint provably leaves the bound where it stands, and the
perspective reformulation of the power block is measured to lower it only
to the wake-free value.

\item \emph{A valid dual bound and the inequalities that repair the root.}
We introduce a family of ``band inequalities'' coupling the layout to the
power variables over a band of cells that the wind direction orders, bounding by
a dynamic program the largest model power that $k$ turbines can produce
inside the band and taking the chords of the resulting concave function.
In our computational experience, the band inequalities improve
the root bound very significantly, cut both nodes and time on the
instances that close, and lower the median certificate on every instance
left open (Section~\ref{sec:exp-cuts}).

\item \emph{Branch-and-check with upwind cuts, certifying on the exact
scale.} Every certificate above bounds the model objective, which the
interpolated power curve inflates above the exact energy. We therefore
decompose: a master that keeps the site
variables, the packing constraints and one hypograph variable per cell and
scenario, dropping the flow and power blocks altogether, and an oracle
that scores (exactly) the proposed incumbents and returns optimality cuts
to the master. Our optimality cuts are \emph{upwind cuts}: their support
is restricted to the cells upwind of the cell they charge, and their
constant is the oracle value at exactly those cells. On the largest
instance both methods close, the scheme certifies the exact energy in a
fraction of the time the monolithic model needs to certify only its
interpolated surrogate (Section~\ref{sec:exp-bnc}).
\end{itemize}

This paper is methodological: its object is the formulation and how to
solve it exactly. Throughout, ``exact'' is meant with respect to the
model: propagation is a calibrated surrogate of an
engineering wake model, and every optimum, bound and certificate below
is a statement about the formulation as calibrated, not about the
physics it approximates, and holds up to the solver's tolerances.

The paper is organized as follows. Section~\ref{sec:related} reviews the
related literature, Section~\ref{sec:problem} states the problem in
words and introduces the EndoWake wind model, and Section~\ref{sec:model}
states the MILP model built on it and
proves the structural property---antitonicity of the maximal flow
field---on which everything else rests. Section~\ref{sec:vacuous}
characterizes the linear relaxation and shows that local repairs do not
strengthen it.
Section~\ref{sec:cuts} develops the band inequalities. Section~\ref{sec:bnc} presents the branch-and-check scheme
with upwind cuts. Section~\ref{sec:exp} reports our computational
experience, and Section~\ref{sec:concl} concludes.

\section{Related work}
\label{sec:related}

Mathematical programming
approaches to WFLO were initiated by \citet{donovan2005} and \citet{fagerfjall2010}, and systematized by \citet{turner2014}, who compare quadratic-integer and mixed-integer
linear formulations built on a pre-computed interaction matrix;
\citet{zhang2014} extend the comparison to constraint programming and to a
decomposition we return to below, \citet{kuo2016} replace the analytical
deficits by CFD-derived ones inside the same mixed-integer scheme, and
\citet{archer2011} follow the same pattern. All of them
combine pairwise wake deficits, evaluated in free stream by an analytical
model \citep{jensen1983,bastankhah2014}, through a superposition rule fixed
before the layout is known---linear \citep{lissaman1979} or root-sum-square
\citep{katic1986}.

Within mathematical programming, the
state of the art rests on interference-matrix models. \citet{cazzaro2023} optimize layout
and cable routing at industrial sizes, though the layout side is
solved matheuristically; \citet{pedersen2026} cast the integrated problem
as a quota Steiner tree problem with interference and solve it exactly,
with an interference-based splitting strategy, orders of magnitude beyond
a generic MILP solver; and \citet{perezrua2023} embed per-site wind-speed
variables and a stepwise power curve in a MILP solved over adaptive
neighborhoods, with per-iteration gaps but no global certificate;
and the most recent integrated layout-and-cable models
\citep{cao2025,hu2026} keep the same pricing of the wakes, by precomputed
pairwise deficits combined under the root-sum-square rule, paying for the
nonlinearity of that rule with a MINLP. All of
them price the wakes by precomputed pairwise deficits under a fixed
superposition rule, and extend an earlier mathematical-programming
campaign \citep[for an overview, see][]{fischetti2019overview}:
interference-matrix models attacked by proximity
search~\citep{fischetti2014proximity,fischetti2016proximity}, layout and
cable routing optimized jointly with the relaxation strengthened by
Benders-like cuts from an induced-clique substructure---valid by a purely
combinatorial argument rather than by linear programming
duality---\citep{fischetti2023integrated}, and industrial deployment at
Vattenfall~\citep{fischetti2020vattenfall}. It is notable that across
this line the layout side
certifies through incumbents and matheuristics; where dual bounds are
attacked head-on~\citep{fischetti2023integrated,pedersen2026}, the means
is a combinatorial strengthening of the relaxation, which is the register
of the present paper.
The metaheuristic line goes further: \citet{cazzaro2022vns} reach
industrial sizes with a variable neighborhood search, and \citet{cazzaro2022multiscale} decompose the design across scales, both
without ever asking the relaxation for a bound. The engineering
practice largely works in continuous coordinates and gradients: exact
gradients of the wake model~\citep{guirguis2016}, the boundary-grid
parameterization of \citet{stanley2019}, the Fourier-based FLOWERS model
with analytic gradients~\citep{locascio2024}, and the pseudo-gradient
method of \citet{quaeghebeur2021}; there the question of a certificate
does not arise at all. Closer to our modeling choice, the engineering
literature has moved away from fixed superposition rules: the
momentum-conserving superposition of \citet{zong2020}, the cumulative
wake model of \citet{bastankhah2021} and the recursive wake merging of
\citet{lanzilao2022} all let a wake depend on the inflow the upwind
machines have already shaped, which is the effect our propagation
reproduces by construction; and \citet{king2017} optimize a layout
against a flow field carried as the state of an adjoint computation.
These are evaluators or gradient methods, however: none of these enters a mixed-integer model, and none returns a dual bound.
Making a physical state endogenous to a design problem is, by itself,
the classical move of PDE-constrained optimization, mixed-integer
variants included~\citep{gnegel2021}; what is specific here is the
combination the layout problem affords---binary siting decisions, linear
local propagation, acyclicity along the flow---and the optimization
structure it exposes. To the best of our knowledge, no valid
inequalities coupling the
layout to the flow field have been proposed for WFLO before, which is what
Section~\ref{sec:cuts} supplies.

When
the second stage of a decomposition provides no useful duals, the classical
Benders machinery \citep[for a review, see][]{rahmaniani2017} gives way to logic-based Benders decomposition, in which
the subproblem acts as an oracle and the cuts are derived from
inference~\citep{hooker2003}, the combinatorial Benders cuts of
\citet{codato2006} being the purely combinatorial instance of that
idea. Branch-and-check~\citep{thorsteinsson2001} is
its incremental variant, in which the master is solved once and the oracle
is called at every incumbent; \citet{beck2010} measures when it wins, and
cheap subproblems---ours costs less than a linear program---are its
favorable regime. \citet{forbes2024} give the closest methodological
relative: a logic-based Benders scheme whose subproblem is a
\emph{simulation} evaluating each incumbent, with cuts strengthened under
monotone performance measures. On the optimality side, the integer
L-shaped method~\citep{laporte1993} covers binary first stages with
integer recourse: \citet{angulo2016} improve its alternating strategy, and
\citet{parada2024} disaggregate the cut and strengthen it under monotone
recourse. For WFLO itself, \citet{zhang2014} already proposed a
decomposition of this family: a MIP master over the layout variables, an
exact evaluator of each incumbent under the root-sum-square interference
formula, and two families of logic-based cuts---full-support no-good cuts
carrying a big-$M$, and three-turbine cuts valid by the sublinearity of
the deficits---re-solving the master from scratch at each round, with no
instance closed within an hour.

\section{The problem, and the EndoWake wind model}
\label{sec:problem}

\paragraph{The problem.}
We work in the discrete WFLO setting: we are given a grid of candidate
cells, a cap on the number of (identical) turbines, and a minimum
separation between any two. The objective is to maximize the expected
energy over a finite set of
wind \emph{scenarios}, each prescribing a direction, an undisturbed
(\emph{free-stream}) speed $\wmax_s$ and a probability $p_s$. Wakes
couple the cells: a turbine slows the air downstream, production grows
with the \emph{cube} of the wind speed below the rated regime, and the
value of a position therefore depends on the entire layout, through a
physical field rather than through a distance. Of the power curve
$P(v)$ we use two structural facts. It is
non-decreasing on $[0, v_{\text{cut-out}}]$---nothing below the
\emph{cut-in} speed, a rise roughly like $v^3$, a plateau at the rated
power $P_{\max}$ from the rated speed $v_r$ on---and it is \emph{not}
concave there, being a convex rise followed by a plateau. Monotonicity
is what the standing assumption of Section~\ref{sec:model} secures, and
it is the property the relaxation of Section~\ref{sec:vacuous}
rests on; the failure of concavity is what the model has to work around,
since a maximization cannot use the curve itself as an upper bound.

\paragraph{Our new formulation: EndoWake.}
The standard treatment evaluates, once and offline, the speed deficit that
each candidate position would cause at each other one, constructing the
\emph{interference matrix}, and then combines
those pairwise deficits by a superposition rule fixed before the layout is
known. EndoWake precomputes no interference matrix and fixes no
superposition rule. Instead, a regular grid is superimposed on the site,
and the wind speed is carried as a
\emph{decision variable at every cell}. The cells are grouped into
\emph{propagation planes} orthogonal to the wind of the scenario, ordered
from upwind to downwind---if the wind blows from north to south, the
planes are the rows of the grid, taken from the top one down---and the
speed at a cell is bounded by a weighted
average of the speeds at a few cells of the preceding plane, pulled back
toward the free stream by a \emph{recovery} term.

Each cell reads a fixed window of cells on the preceding plane---the
one directly upwind of it, and two on either side of that one, at
transversal offsets $-2,\dots,2$: this window is the \emph{stencil}. The
weights with which the cell averages the window form the \emph{kernel}, a
vector of five non-negative numbers summing to one and symmetric about its
center. Summing to one means that an undisturbed flow stays undisturbed;
being spread over five offsets rather than concentrated on the central one
means that whatever deficit arrives is smeared a little sideways at every
plane, which is how a wake widens as it travels. Two parameters fix how
much of that lateral spread there is, and a third, the recovery rate, fixes
how fast the profile returns toward the free stream. All three are
\emph{input data} and not decisions: the kernel is fitted once, before any
optimization, against a reference wake model, and the optimizer takes it as
given. That
calibration---a weighted least-squares fit against an analytical wake
model, described at the end of Section~\ref{sec:model}---leaves a
residual below $8\%$ of the reference field at the resolution used here.
Applying the same
kernel plane after plane is a discrete convolution along the flow, which
is the closed form the single-turbine field takes on an unbounded grid.

A turbine installed at a cell acts
by multiplying the speed handed to the cells it shadows by a factor
$\rho < 1$. Wake merging, higher-order effects and the recovery are then
consequences of the propagation rather than modeling choices. The price
is that the flow becomes part of the feasible region:
that is the object this paper studies.

\section{The MILP model}
\label{sec:model}

We start with the notation used throughout. As already mentioned, the
site is discretized into a regular grid of cells with spacing
$\Delta$, expressed in rotor diameters $D$; a wind scenario prescribes a
direction, a free-stream speed and a probability (the probabilities
summing to one over the scenario set). We assume that every free-stream
speed satisfies
$\wmax_s \le v_{\text{cut-out}}$, so that the modeled power is
non-decreasing on the whole range $[0,\wmax_s]$. This assumption has no
impact in practice, as a scenario whose free
stream exceeded the cut-out speed would shut every turbine down (a
parked rotor sheds no wake, so no machine would see less than its free
stream), and can simply be removed from the list. Our instances carry one scenario per
direction, so we
speak of sectors and of scenarios interchangeably. The following table
collects the symbols used throughout.

{\small
\begin{longtable}{p{0.22\textwidth}p{0.70\textwidth}}
\toprule
symbol & meaning \\
\midrule
\endfirsthead
\midrule
symbol & meaning \emph{(continued)} \\
\midrule
\endhead
\midrule
\endfoot
\bottomrule
\endlastfoot
$\mathcal{C}$, $\mathcal{A}$ & grid cells; allocatable cells,
$\mathcal{A} \subseteq \mathcal{C}$ \\
$\mathcal{S}$; $\theta_s$, $\wmax_s$, $p_s$ & wind scenarios; direction,
free-stream speed and probability of scenario $s$ \\
$N_{\min}$, $N_{\max}$; $C_c$ & bounds on the number of machines
installed; installation cost at $c$, taken to be zero in our instances \\
$\mathcal{Q}$ &  a family of cliques of the \emph{conflict
graph}---one node per allocatable cell, an edge between two cells closer
than the minimum separation---that covers every edge. Ours consists of
the $3 \times 3$ windows with
top-left cell at every cell of the grid, truncated at its border (the
one-cell window at the far corner is dropped): $|\mathcal{Q}| = n^2 - 1$
on an $n \times n$ grid \\
 &  \\
$K = (K_{-2},\dots,K_2)$ & propagation kernel,
$K = \bigl(\tfrac{\beta}{2},\tfrac{\alpha}{2},
1-\alpha-\beta,\tfrac{\alpha}{2},\tfrac{\beta}{2}\bigr)$,
non-negative and summing to one, the same for every sector \\
$\alpha$, $\beta$, $\gamma$ & the three parameters of the propagation:
transversal spreading onto the nearest and the next-nearest offset, and
recovery toward free stream. They are calibrated once, as described below,
and are the same for every instance \\
 &  \\
$\mathrm{up}_s(c,m)$ & the five upwind neighbors of cell $c$ in the
stencil of scenario $s$, $m = -2,\dots,2$ \\
$\rho = 1-2a \in [0,1)$ & fraction of the wind speed surviving immediately
behind a rotor; $a \in (0,\tfrac12]$ is the calibrated strength of the
lumped source, not the momentum-theory induction factor:
$a = 0.3534$, hence
$\rho = 0.293$ \\
$\mathcal{F}_s(c)$ & downstream footprint of a turbine at $c$: the cells
of the next propagation plane at most $h$ cells off the flow line
through $c$, with $h = \mathrm{round}\bigl(D/(2\Delta) - \tfrac12\bigr)$
rounding ties to even ($h = 0$, the single downstream cell, at
$\Delta = D$) \\
$P$, $\widehat{P}$, $\bar{P}$ & the three power curves used
throughout: the exact power curve $P$ of Section~\ref{sec:problem}; the
piecewise-linear interpolant $\widehat{P}$ the breakpoints below define,
which is the representation the model uses under SOS2 conditions; and the
upper concave envelope $\bar{P}$ of the breakpoints, which is what the
$\lambda$-block of \eqref{eq:mip-power} yields once the SOS2 conditions
are dropped, hence the power curve the linear relaxation sees. All three
are non-decreasing on $[0,\wmax_s]$, and $\bar{P} \ge \widehat{P} \ge P$,
the latter because $P$ is convex between consecutive breakpoints and
vanishes below cut-in, so its chords lie above it \\
$v_r$, $P_{\max}$ & rated speed and rated power of one machine: the point
at which $P$ flattens, and the
value it is capped at. For the turbine of Section~\ref{sec:exp},
$v_r = 10.34$~m/s \\
$(v_b,P_b)$, $b = 1,\dots,B$ & breakpoints of power curve $P$; by
convention they start at $0$, include the cut-in speed and the rated speed $v_r$, and reach
at least $\wmax_s$ \\
\end{longtable}}
\addtocounter{table}{-1}

Formally, a \emph{propagation plane} is the set of cells sharing the same
coordinate along the dominant axis of the scenario. Since every cell
depends only on cells of the preceding plane, the dependency graph is
acyclic for every wind direction---each scenario is assigned the octant
of its dominant axis, and the cells are swept plane by plane along that
axis---and this is what
Proposition~\ref{prop:fixpoint} below rests on. At the resolution
$\Delta = D$ used in all our experiments the footprint
$\mathcal{F}_s(c)$ is the single cell directly downstream of $c$, one rotor diameter wide.

One convention has to be stated, because it is a modeling choice and not
an omission: a cell of the first plane, and a cell whose stencil reaches
past a lateral edge, read the \emph{free-stream} speed $\wmax_s$ for the
neighbors that fall outside $\mathcal{C}$. Outside the grid the flow is
undisturbed, rather than unconstrained. The convention is the only one
available without modeling the surroundings, and it is mildly optimistic
for the cells it touches: a machine at the upwind edge is guaranteed clean
wind by construction, whereas one in the interior has to earn it. When
that matters, the remedy costs no binaries---enlarge $\mathcal{C}$ by a
frame of a few cells while leaving $\mathcal{A}$ alone, so that the extra
cells carry flow but can hold no turbine, and the edge of the allocatable
region is then interior to the flow domain. We use no frame in the
experiments of Section~\ref{sec:exp}, whose ladder of instances keeps the
density low; the frame remains the conservative option when edge effects
matter.

\paragraph{The optimization model.} We use three groups of variables. A
binary $x_c$, $c \in \mathcal{A}$, selects the cells that receive a
turbine. A continuous $w^s_c \in [0,\wmax_s]$, one per cell and scenario,
is the wind speed the model assigns to $c$ under $s$; this is the variable
that distinguishes the construction, since the flow field is decided inside
the optimization rather than tabulated before it. Some statements read
better in terms of the shortfall of that speed below the free stream, the
\emph{deficit} $d^s_c = \wmax_s - w^s_c$; this is a change of variables and
not a further datum, and we use whichever of the two is clearer at the
point of use. A continuous
$z^s_c \in [0,P_{\max}]$, $c \in \mathcal{A}$, is the power credited to
cell $c$ under $s$; the auxiliary weights $\lambda^s_{cb} \ge 0$ carry the
piecewise-linear representation of the power curve. The model reads:
\begin{align}
\max\ & \sum_{s \in \mathcal{S}} p_s \sum_{c \in \mathcal{A}} z^s_c
        \;-\; \sum_{c \in \mathcal{A}} C_c\, x_c
\label{eq:mip-obj}\\[2pt]
\text{s.t.}\quad
& w^s_c \;\le\; \gamma \wmax_s + (1-\gamma)
  \sum_{m=-2}^{2} K_m\, w^s_{\mathrm{up}_s(c,m)},
& & c \in \mathcal{C},\ s \in \mathcal{S},
\label{eq:mip-flow}\\
& w^s_{c'} \;\le\; \rho\, w^s_c + \wmax_s\,(1-x_c),
& & c \in \mathcal{A},\ c' \in \mathcal{F}_s(c),\ s \in \mathcal{S},
\label{eq:mip-wake}\\
& \left.
\begin{aligned}
& \textstyle\sum_b \lambda^s_{cb} = 1, \quad
  \textstyle\sum_b v_b \lambda^s_{cb} = w^s_c, \quad
  \lambda^s_{c\cdot}\ \text{SOS2},\\
& z^s_c \le \textstyle\sum_b P_b \lambda^s_{cb}, \quad
  z^s_c \le P_{\max}\, x_c
\end{aligned}\ \right\}
& & c \in \mathcal{A},\ s \in \mathcal{S},
\label{eq:mip-power}\\
& \left.
\begin{aligned}
& \textstyle\sum_{c \in Q} x_c \;\le\; 1,\ \ Q \in \mathcal{Q}; \qquad
  \\
& N_{\min} \le \textstyle\sum_{c \in \mathcal{A}} x_c \le N_{\max},
\quad x_c \in \{0,1\},\ c \in \mathcal{A}
\end{aligned}\ \right\}
& &
\label{eq:mip-dist}
\end{align}

The objective \eqref{eq:mip-obj} maximizes expected production over the
scenarios net of installation costs. The wind flow rows \eqref{eq:mip-flow} cap
each speed by a convex combination of the free-stream value and of five
speeds one plane upwind: since $K$ is non-negative and sums to one, the
right-hand side can neither create wind nor drive it negative, whatever the
layout. A neighbor falling outside the grid contributes the undisturbed
speed $\wmax_s$, not zero---the air entering the domain has met no turbine,
and the alternative would starve the very boundary cells a layout optimizer
is most tempted to use. The wake rows \eqref{eq:mip-wake} state that a
turbine at $c$ scales down, by the factor $\rho$, whatever speed reaches
it; the big-$M$ constant $\wmax_s$ switches the implication off when
$x_c = 0$. Note that the reduction is multiplicative on the \emph{local}
speed: a machine already deep in a wake casts a proportionally weaker wake,
which is precisely the higher-order effect that pairwise coefficients
cannot express. The block \eqref{eq:mip-power} represents the power curve:
under the SOS2 condition---at most two consecutive weights positive, a
device due to \citet{beale1970}---the pair
$(w^s_c, \sum_b P_b\lambda^s_{cb})$ lies on the interpolant
$\widehat{P}$, so the block states $z^s_c \le \widehat{P}(w^s_c)$, while
the final row switches production off where no turbine stands. Finally,
\eqref{eq:mip-dist} encodes the minimum separation as \emph{clique
inequalities}, one per clique of family $\mathcal{Q}$,
and the cardinality bounds close the block. The
aggregated form $\sum_{v \in N(c)} x_v \le |N(c)|\,(1-x_c)$, with $N(c)$
the cells conflicting with $c$, is a weaker encoding. Every
wind
constraint is written with ``$\le$'' rather than ``$=$'': the objective is
non-decreasing in $w$ through $z$, so the inequalities are tight where it
matters, and the field is defined by a maximal point rather than by an
equation system, as we now make precise. The model has $|\mathcal{A}|$
binaries and $O(|\mathcal{S}|\,|\mathcal{C}|)$ continuous variables and
rows: linear in the grid, against the $\Theta(|\mathcal{C}|^2)$ entries of
an interference matrix.

\paragraph{The maximal field, and its antitonicity.} Fix a scenario $s$
and a possibly fractional solution $x \in [0,1]^{\mathcal{A}}$, and let
$T_x$ assign to each
cell the \emph{smallest} of the right-hand sides that \eqref{eq:mip-flow}
and \eqref{eq:mip-wake} impose on it, so that the wind constraints read
$w \le T_x(w)$ and the feasible fields form the set
$\mathcal{W}(x) = \{ w \in [0,\wmax_s]^{\mathcal{C}} :
w \le T_x(w) \}$.
Everything in this paper rests on the following proposition, and above all
on its part~(d); we therefore prove the parts we use rather than import
them.

\begin{proposition}[Maximal field]
\label{prop:fixpoint}
For every $x \in [0,1]^{\mathcal{A}}$ and every scenario $s$:
\begin{enumerate}
\item[(a)] $T_x$ maps $[0,\wmax_s]^{\mathcal{C}}$ into itself, so the
variable bounds are consistent with the constraints;
\item[(b)] $\mathcal{W}(x)$ has a greatest element $w^{*}(x)$, computable
in $O(|\mathcal{C}|)$ time by one forward sweep of the grid in the order
of the propagation planes;
\item[(c)] any objective non-decreasing in $w$ attains its maximum over
$\mathcal{W}(x)$ at $w^{*}(x)$;
\item[(d)] \emph{(antitonicity)} if $x' \ge x$ componentwise, then
$w^{*}(x') \le w^{*}(x)$ componentwise: installing additional turbines can
only lower the maximal field, everywhere.
\end{enumerate}
\end{proposition}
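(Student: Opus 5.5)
The whole argument rests on two structural facts about $T_x$. First, $T_x$ is \emph{monotone}: it is non-decreasing in $w$ and non-increasing in $x$. Second, the dependency graph is \emph{acyclic}: every cell reads only cells of the preceding plane, plus the constant $\wmax_s$ for neighbors outside $\mathcal{C}$. I would prove the four parts in order, establishing these two facts as preliminaries.

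For (a), I bound the two right-hand sides separately, for any $w \in [0,\wmax_s]^{\mathcal{C}}$.
\begin{itemize}
\item The flow right-hand side of \eqref{eq:mip-flow} is a convex combination, with weights $\gamma$ and $(1-\gamma)K_m$, of the value $\wmax_s$ and of entries lying in $[0,\wmax_s]$. Hence it lies in $[0,\wmax_s]$.
\item The wake right-hand side $\rho w_c + \wmax_s(1-x_c)$ is non-negative, since $\rho \ge 0$ and $x_c \le 1$.
\item It may exceed $\wmax_s$, so I define $T_x$ with the box bound $\wmax_s$ included among the minima; equivalently, I note that the minimum with the flow term is already at most $\wmax_s$.
\end{itemize}
The minimum of quantities in $[0,\infty)$, at least one of which is at most $\wmax_s$, lies in $[0,\wmax_s]$.

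For (b), I define $w^*$ by a forward sweep: plane by plane, set $w^*_{c} := (T_x(w^*))_c$. This is well defined because $(T_x(w))_c$ depends only on entries of $w$ at cells of the preceding plane, which are already fixed. A cell $c'$ in the footprint $\mathcal{F}_s(c)$ lies on the next plane after $c$, so the wake rows respect the same order. Each cell costs $O(1)$ work, since the stencil has five cells and the footprint support is fixed, giving $O(|\mathcal{C}|)$ time. By (a), $w^* \in \mathcal{W}(x)$, with equality $w^* = T_x(w^*)$. To show it is the greatest element, I take any $w \in \mathcal{W}(x)$ and prove $w \le w^*$ by induction on planes.
\begin{itemize}
\item \emph{Base case.} On the first plane, $T_x$ reads only constants, so $w_c \le (T_x(w))_c = w^*_c$.
\item \emph{Inductive step.} If $w \le w^*$ on all earlier planes, then monotonicity of each right-hand side in the upwind entries gives $w_c \le (T_x(w))_c \le (T_x(w^*))_c = w^*_c$. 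This uses $K_m \ge 0$, $1-\gamma \ge 0$ and $\rho \ge 0$, together with the fact that the minimum of non-decreasing functions is non-decreasing.
\end{itemize}
Part (c) is then immediate: for every $w \in \mathcal{W}(x)$ we have $w \le w^*$, so $f(w) \le f(w^*)$ for any non-decreasing $f$.

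For (d), I take $x' \ge x$ and first observe that $T_{x'}(w) \le T_x(w)$ pointwise. The only dependence on $x$ is through the term $\wmax_s(1-x_c)$, which is non-increasing in $x_c$; the flow rows do not involve $x$ at all. The induction of (b) then runs again on $w^*(x')$ against $w^*(x)$. Assuming $w^*(x') \le w^*(x)$ on earlier planes,
\[
w^*(x')_c = T_{x'}(w^*(x'))_c \le T_{x'}(w^*(x))_c \le T_x(w^*(x))_c = w^*(x)_c .
\]
The first inequality uses monotonicity of $T_{x'}$ in $w$, and the second uses monotonicity in $x$.

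The main obstacle is bookkeeping rather than depth. I must confirm that the wake rows never create a same-plane or backward dependency, which requires that $\mathcal{F}_s(c)$ lie strictly on the next plane as defined. I must also confirm that the outside-the-grid convention, which feeds in the constant $\wmax_s$, preserves both monotonicity and the base case. Both follow directly from the definitions, so I would state them explicitly as a lemma before the induction.
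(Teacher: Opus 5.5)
Your proof is correct and follows the paper's argument essentially step for step. It uses the same two properties of $T_x$ (monotone in $w$, antitone in $x$), the same forward substitution with equality and plane-by-plane induction for (b), and the same induction along the planes for (d). The one difference is the order of the two inequalities in (d): you pass through $T_{x'}(w^{*}(x))_c$, the paper through $T_x(w^{*}(x'))_c$, and both chains are valid.
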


\begin{proof}
Two elementary properties of $T_x$ drive the proof. First, $T_x$ is
\emph{monotone in $w$}: every candidate right-hand side (the flow bound
and each wake bound) has non-negative coefficients on $w$, so each is
non-decreasing in $w$, and so is their minimum. Second, $T_x$ is
\emph{antitone in $x$} for fixed $w$: the layout enters only through the
terms $\wmax_s(1-x_c)$ of the wake bounds, each non-increasing in $x$,
so $x' \ge x$ implies $T_{x'}(w) \le T_x(w)$ pointwise.

(a) If $0 \le w \le \wmax_s$ componentwise, every wake bound is
non-negative and the flow bound lies in $[\gamma\wmax_s, \wmax_s]$,
being a convex combination of values in $[0,\wmax_s]$ blended with
$\wmax_s$; the minimum of the candidates is therefore non-negative, and it
is at most $\wmax_s$ because the flow bound is.

(b) The value $T_x(w)_c$ depends only on entries of $w$ strictly upwind of
$c$---the flow bound reads the preceding plane, and a wake bound at $c$
reads the cell one plane upwind whose footprint contains $c$---so the
dependency graph is acyclic and the planes give a topological order.
Define $w^{*}$ by forward substitution with equality,
$w^{*}_c = T_x(w^{*})_c$, in that order: each value depends only on values
already computed, so $w^{*}$ is well defined, and it lies in
$[0,\wmax_s]$ by (a) and induction along the order. For any
$w \in \mathcal{W}(x)$, the same induction gives
$w_c \le T_x(w)_c \le T_x(w^{*})_c = w^{*}_c$, the middle step by
monotonicity and the inductive hypothesis on the upwind cells; hence
$w^{*}$ is the greatest element.

(c) Immediate from (b).

(d) Induction along the topological order. On the first plane both maximal
fields equal the flow bound at free stream. At a cell $c$, assume
$w^{*}(x') \le w^{*}(x)$ on every cell strictly upwind of $c$. Then
$w^{*}(x')_c = T_{x'}(w^{*}(x'))_c
\le T_{x}(w^{*}(x'))_c
\le T_{x}(w^{*}(x))_c = w^{*}(x)_c$,
the first inequality by antitonicity of $T$ in $x$ at the fixed field
$w^{*}(x')$, and the second by monotonicity of $T_x$ in $w$ together with
the inductive hypothesis, since $T_x(\cdot)_c$ reads only upwind entries.
\end{proof}

Note that part (b) is also an algorithm: evaluating a layout exactly is one
sweep of the grid, five multiply--add operations and one comparison
per cell and scenario, plus one power-curve lookup per installed
cell, which is the
oracle of Section~\ref{sec:bnc} and the evaluator behind every ``exact
AEP'' figure of Section~\ref{sec:exp}---\emph{AEP}, annual energy
production, being the customary name in this application for the quantity
the objective measures. Part (d) is the single structural
hypothesis on which the validity of every cut in this paper rests---the
band inequalities of Section~\ref{sec:cuts} and the
upwind cuts of Section~\ref{sec:bnc} alike---which is why we have proved it
here rather than cited it. One caveat bounds part (c): the monotonicity it asks of the
objective is a property of the \emph{modeled} power, and
\eqref{eq:mip-power} represents the curve only up to the free-stream speed
of the scenario, which lies below cut-out by the standing assumption
above, so on the whole modeled range the representation is indeed
non-decreasing.

\paragraph{Calibration and instances.} The parameters
$(\alpha,\beta,\gamma)$ and the source strength $2a$ are calibrated by
weighted least squares against the
Bastankhah--Port\'e-Agel wake model~\citep{bastankhah2014} at thrust
coefficient $C_T = 8/9$: the single-turbine field of the kernel---the
discrete convolution above, fed by the same lumped source the MILP
injects---is matched to the analytical deficit on the operational window
of $3\,D$--$12\,D$\
downstream, where actual turbine spacings live, over a domain reaching
$20\,D$ downstream and $5\,D$ laterally; the amplitude is recovered in
closed form at each evaluation, the field being linear in the source.
The error measure is the RMS of the residual over the RMS of the
reference on the window. At $\Delta = D$, the resolution of every
experiment below, the residual is 7.7\%, the
fitted values being
reported in the setup of Section~\ref{sec:exp}. Figure~\ref{fig:calib}
shows the calibration as a field. That $\beta$ is driven
to zero is an outcome of the fit, not a modeling choice: the theory
above is stated for the full five-point stencil.
The results below are statements about the model as calibrated, not about
the fidelity of the fit.

\begin{figure}[htbp]
\centering
\includegraphics[width=0.85\textwidth]{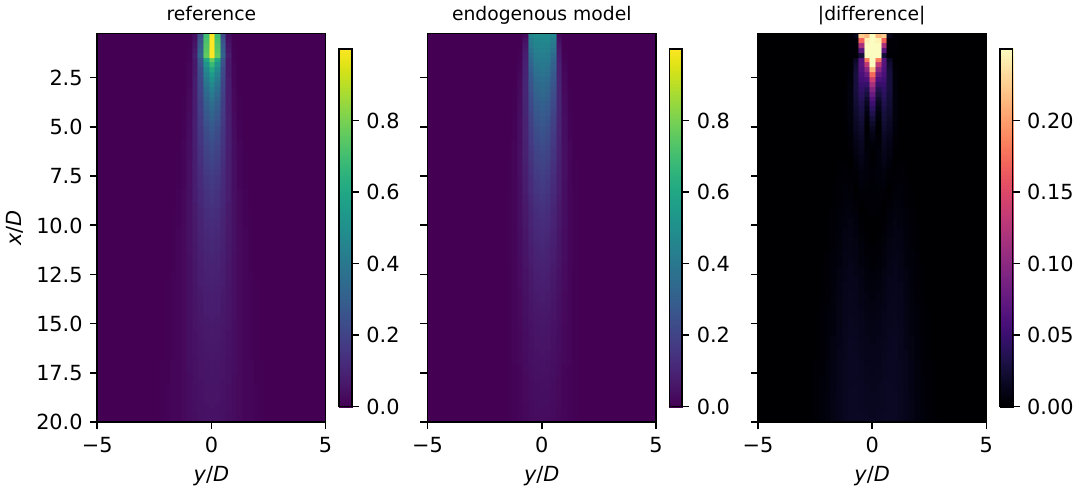}
\caption{The calibration seen as a field, for the northern sector:
reference wake model (left), calibrated EndoWake field (center), and
pointwise absolute difference (right).}
\label{fig:calib}
\end{figure}

\section{A weak relaxation}
\label{sec:vacuous}

In this section we study the strength of the linear relaxation of the
model. The statement below uses the concave envelope $\bar{P}$ of the symbol
table: once the SOS2 conditions are dropped, the $\lambda$-block of
\eqref{eq:mip-power} admits exactly the pairs $(w, z)$ with $z \le
\bar{P}(w)$, so $\bar{P}$ is the power curve the relaxation actually
prices.

\begin{proposition}[Vacuous root bound]
\label{prop:vacuous}
Consider the relaxation of model \eqref{eq:mip-obj}--\eqref{eq:mip-dist}
obtained by replacing $x \in \{0,1\}^{\mathcal{A}}$ with
$x \in [0,1]^{\mathcal{A}}$ and dropping the SOS2 conditions, with no
installation cost. Let $\bar{x} = N_{\max}/|\mathcal{A}|$. If
$\bar{x} \le \min\bigl\{ \min_s \bar{P}(\wmax_s/2)/P_{\max},\;
1/q_{\max},\; 1/2 \bigr\}$,
where $q_{\max}$ is the size of the largest clique of the family
$\mathcal{Q}$ in \eqref{eq:mip-dist},
then the optimal value of the relaxation is exactly $N_{\max} P_{\max}$.
\end{proposition}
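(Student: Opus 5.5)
The plan is to prove the two inequalities separately. The upper bound $N_{\max}P_{\max}$ should follow from the linking rows alone. The matching lower bound comes from exhibiting one explicit feasible point of the relaxation: the uniform fractional layout $x_c=\bar{x}$ for every $c\in\mathcal{A}$, together with a constant wind field at half the free stream.

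\emph{Upper bound.} For any feasible point of the relaxation, the last row of \eqref{eq:mip-power} gives $z^s_c \le P_{\max} x_c$. Summing over $c\in\mathcal{A}$ and using the cardinality row of \eqref{eq:mip-dist}, I get $\sum_c z^s_c \le P_{\max}\sum_c x_c \le N_{\max}P_{\max}$ for every $s$. With $C_c=0$ and $\sum_s p_s=1$, the objective \eqref{eq:mip-obj} is therefore at most $N_{\max}P_{\max}$. The wake, flow and $\lambda$-rows are not needed for this direction.

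\emph{Lower bound, by construction.} Take $x_c=\bar{x}$ on $\mathcal{A}$ and $w^s_c=\wmax_s/2$ on all of $\mathcal{C}$. I then check each block in turn.
\begin{enumerate}
\item[(i)] \emph{Separation and cardinality.} The clique rows hold because $\sum_{c\in Q}x_c\le q_{\max}\bar{x}\le 1$. The cardinality row holds with equality, since $|\mathcal{A}|\bar{x}=N_{\max}$; this also satisfies the lower row, assuming as implicitly intended that $N_{\min}\le N_{\max}$.
\item[(ii)] \emph{Flow rows \eqref{eq:mip-flow}.} Every upwind neighbor reads either $\wmax_s/2$ or, off the grid, $\wmax_s$. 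Because $K$ is non-negative and sums to one, the kernel average is at least $\wmax_s/2$. Hence the right-hand side is at least $\gamma\wmax_s+(1-\gamma)\wmax_s/2\ge \wmax_s/2$.
\item[(iii)] \emph{Wake rows \eqref{eq:mip-wake}.} The right-hand side is $\rho\,\wmax_s/2+\wmax_s(1-\bar{x})$. Since $\bar{x}\le 1/2$, this is at least $\wmax_s/2$ already from the second term, even if $\rho=0$.
\item[(iv)] \emph{Power block, SOS2 dropped.} The $\lambda$-block admits exactly the pairs with $z\le\bar{P}(w)$. I set $z^s_c=P_{\max}\bar{x}$. This is allowed because $P_{\max}\bar{x}\le \bar{P}(\wmax_s/2)$ by the first term of the hypothesis. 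It also satisfies $z^s_c\le P_{\max}x_c$ with equality, and $z^s_c\in[0,P_{\max}]$. The $\lambda$ weights realizing $\bar{P}(\wmax_s/2)$ are the two breakpoints of the envelope adjacent to $\wmax_s/2$; these lie in the admissible range because the breakpoints reach at least $\wmax_s$.
\end{enumerate}
The objective at this point is $\sum_s p_s|\mathcal{A}|P_{\max}\bar{x}=N_{\max}P_{\max}$, which matches the upper bound.

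I expect no step to be a real obstacle. The only point needing care is (iv): the claim that dropping SOS2 turns the $\lambda$-block into exactly the hypograph of $\bar{P}$, and that $\bar{P}(\wmax_s/2)$ is attained by admissible weights. I would justify it directly: the feasible $(w,\sum_b P_b\lambda_b)$ pairs form the convex hull of the breakpoints, whose upper boundary is $\bar{P}$.

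An alternative for (ii)--(iii) would be to invoke Proposition~\ref{prop:fixpoint}(b) and show by induction along the planes that $w^*(\bar{x})\ge \wmax_s/2$. The constant field is simpler and avoids it, since feasibility only requires $w\le T_{\bar{x}}(w)$, not the greatest element.
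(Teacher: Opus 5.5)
Your proof is correct and follows the paper's: the same upper bound from summing $z^s_c \le P_{\max}x_c$ against the cardinality row, and the same uniform fractional witness $x_c=\bar{x}$, $z^s_c=\bar{x}P_{\max}$, checked block by block. The only difference is the wind value. The paper takes the largest uniform field $\hat{w}_s=\min\{\wmax_s,(1-\bar{x})\wmax_s/(1-\rho)\}$ and needs monotonicity of $\bar{P}$ to pass from $\hat{w}_s\ge\wmax_s/2$ to the hypothesis, whereas your constant field $\wmax_s/2$ uses the hypothesis directly; the paper's choice, in exchange, makes visible how little $\rho$ matters.
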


\begin{proof}
The value is at most $N_{\max}P_{\max}$, by summing
$z^s_c \le P_{\max}x_c$ over $c$ against $\sum_c x_c \le N_{\max}$. It is
attained by the uniform fractional point $x_c = \bar{x}$,
$w^s_c = \hat{w}_s := \min\{\wmax_s,\,(1-\bar{x})\wmax_s/(1-\rho)\}$,
$z^s_c = \bar{x}P_{\max}$. At that point \eqref{eq:mip-wake} reads
$w^s_{c'} \le \rho\,w^s_c + \wmax_s(1-\bar{x})$, whose largest uniform
solution is exactly $\hat{w}_s$; the power block, with the SOS2 conditions
dropped, admits $z^s_c = \bar{x}P_{\max}$ as soon as
$\bar{x}P_{\max} \le \bar{P}(\hat{w}_s)$, and $\hat{w}_s \ge
(1-\bar{x})\wmax_s \ge \wmax_s/2$ since $\bar{x} \le 1/2$, with $\bar{P}$ nondecreasing, so
the condition suffices; and the
separation rows read $q_{\max}\,\bar{x} \le 1$. Constraint
\eqref{eq:mip-flow} holds because $\sum_m K_m = 1$ and
$\hat{w}_s \le \wmax_s$, and cardinality holds with equality by the choice
of $\bar{x}$.
\end{proof}

Two remarks say where the difficulty is. Note first what
the condition of Proposition~\ref{prop:vacuous} actually asks. All three terms are constants of order
one: for the turbine of Section~\ref{sec:exp} the power ratio is
0.48\unskip, and at the resolution and
separation of our experiments the largest clique has $q_{\max} = 9$ cells---so the
relaxation is vacuous whenever the average turbine density is below one
turbine every nine cells. Every instance of industrial interest is far
inside that regime---at four rotor diameters of separation on a grid of
one diameter, $\bar{x}\approx 1/16$, with the same $3 \times 3$
windows---and so is every instance of
Section~\ref{sec:exp} that the solver leaves open, where $\bar{x} = (n+2)/n^2 \le 1/9$ from $n = 12$ on. The four smallest grids of the ladder sit above the threshold, and on 3\ of them the packing rows bind before the cardinality one and the root falls short of the trivial value; Section~\ref{sec:exp-cuts} measures what the band inequalities are left to repair there. The
condition is not a technicality that a larger instance escapes; it is the
operating regime.

Note second that $N_{\max}P_{\max}$ is even larger than the wake-free
maximum, the value of a farm whose machines never shadow one another:
$N_{\max}\sum_s p_s \widehat{P}(\wmax_s)$ on the model's own power curve
and $N_{\max}\sum_s p_s P(\wmax_s)$ on the exact one, smaller by
8.2\%\ and
9\%\ respectively for the turbine of
Section~\ref{sec:exp}, whose scenarios all carry $\wmax_s = 10$~m/s. The relaxation therefore discards the
wake interaction \emph{and} the power curve, pricing every fractional
turbine at rated output. A reader would be tempted to blame the loss on
the big-$M$ constant of \eqref{eq:mip-wake}---but $\rho$ does not appear in
that condition at all: shrinking it only lowers the certificate
from $\wmax_s$ to $\hat{w}_s$, which the power curve absorbs. The rest of
this section shows the blame to be misplaced on both counts:
each of the two losses admits
a per-cell repair that is exact, maximal, or both, and neither repair takes
the bound past the wake-free value---nor moves it at all once the
inequalities of Section~\ref{sec:cuts} are in place.

\subsection{Two local repairs, and why they do not work}
\label{sec:lifted}

The power loss suggests a standard repair.
The power block has the on/off structure of the \emph{perspective
reformulation}~\citep{frangioni2006,gunluk2010,hijazi2012}, and because the
curve enters piecewise-linearly the ideal per-cell convexification is
itself polyhedral: on our instances it is two linear inequalities,
$z^s_c \le \widehat{P}(\wmax_s)\,x_c$ and $z^s_c \le
\bigl(\widehat{P}(\wmax_s)/\wmax_s\bigr)\, w^s_c$, the convex hull of
the disjunction ``no turbine, no power'' or ``a turbine, at most the
model power of the wind it sees'' when $\widehat{P}(0)=0$ and
$\widehat{P}$ lies below its chord through the origin on $[0,\wmax_s]$,
true here because $\wmax_s < v_r$ keeps the range sub-rated, so it
costs nothing to impose. It also buys nothing. What it repairs is the
per-cell link between production and installation, and repairing that
exactly replaces $P_{\max}$ by $\widehat{P}(\wmax_s)$ in the trivial
bound---it lowers the bound to the wake-free value
discussed above, and there it stops, on all
4\ instances of the ladder of
Section~\ref{sec:exp} and to a relative discrepancy of
$9\cdot 10^{-16}$\unskip. Imposed on top of the band
inequalities of Section~\ref{sec:cuts} it leaves every root bound
unchanged, by $5\cdot 10^{-10}$\,kW in the worst
case---solver noise.
What Proposition~\ref{prop:vacuous} certifies is not a per-cell defect that
a tighter indicator link could repair.

The wind variables admit the same treatment, and the outcome is the same.

\begin{proposition}[Lifted wake constraint]
\label{prop:lifted}
Fix a scenario whose direction is aligned with a grid axis, at the
resolution where
$\mathcal{F}_s(c)$ is the single downstream cell $c'$. Let $Q \ni c$ be a
clique of allocatable cells, pairwise closer than the minimum separation,
so that at most one of them can carry a turbine; for a cell $u$ (here
$u \in Q$) and any cell $v$, let $d^s_{uv}$ be the \emph{single-source deficit}: the value
the deficit $d^s_v$ of Section~\ref{sec:model} takes under the maximal
field of the single-turbine layout $x = e_u$.
\begin{enumerate}
\item[(a)] Let $\kappa_0 = (1-\gamma)K_0$ be the weight with which
$w^s_c$ enters the flow bound of its own footprint cell. If it satisfies
$\kappa_0 \ge \rho$---on our instances,
$0.83$ against $0.293$---then
\begin{equation}
w^s_{c'} \;\le\; \rho\, w^s_c \;+\; (1-\rho)\,\wmax_s\,(1-x_c)
\;-\; \sum_{u \in Q\setminus\{c\}}
\underbrace{\bigl(d^s_{uc'} - \rho\, d^s_{uc}\bigr)}_{\beta^s_u}\, x_u
\label{eq:lifted}
\end{equation}
is valid for the model, for every feasible layout and every
$w \in \mathcal{W}(x)$; the $\beta^s_u$ are non-negative, so
\eqref{eq:lifted} dominates \eqref{eq:mip-wake}; and the choice is
componentwise maximal: if $N_{\min} = 0$ and $N_{\max} \ge 1$,
\eqref{eq:lifted} holds with equality at the
empty layout and at every single-turbine layout $x = e_u$, $u \in Q$, so
no coefficient can be raised and the constant cannot be lowered.
\item[(b)] Let the direction of every scenario be aligned with a grid
axis, let every
constraint \eqref{eq:mip-wake} be replaced by its lifted form
\eqref{eq:lifted}, in the setting and under the hypothesis of (a), the
clique $Q_c \ni c$ of each row chosen arbitrarily, and let $A_s$ be the
largest total lifting
$\sum_{u \in Q_c\setminus\{c\}} \beta^s_u$ of a row.
With $v_r$ the rated speed---equivalently, since the breakpoints include
it, the smallest speed at which $\bar{P}$ attains $P_{\max}$---and
$\bar{x} = N_{\max}/|\mathcal{A}|$ the uniform value of
Proposition~\ref{prop:vacuous}, if
$\bar{x} \le \min\bigl\{1/q_{\max},\;
\min_s \wmax_s/(\wmax_s + v_r + A_s/(1-\rho))\bigr\}$, then the optimal
value of the relaxation of Proposition~\ref{prop:vacuous} is still
exactly $N_{\max}P_{\max}$.
\end{enumerate}
\end{proposition}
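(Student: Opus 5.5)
The plan is to prove part (a) by a case analysis over the integer layouts, using the flow row \eqref{eq:mip-flow} at $c'$ together with the antitonicity of Proposition~\ref{prop:fixpoint}(d). Part (b) will reuse the uniform-point construction of Proposition~\ref{prop:vacuous}, with a smaller uniform speed. Throughout I work with deficits $d = \wmax_s - w$. Write the flow row at $c'$ as $d_{c'} \ge \kappa_0 d_c + R(d)$, where $R(d) = (1-\gamma)\sum_{m\neq 0} K_m d_{\mathrm{up}_s(c',m)}$ is non-decreasing in $d$. In deficit form, \eqref{eq:lifted} reads $d_{c'} \ge \rho d_c + \wmax_s x_c - \rho\wmax_s x_c \cdot(\ldots)$; concretely it reduces, case by case, to the three inequalities below.

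For validity in (a), the clique lets at most one cell of $Q$ carry a turbine, so three cases suffice.
\begin{itemize}
\item \emph{No turbine in $Q$.} The claim is $d_{c'} \ge \rho d_c$. It follows from the flow row, since $\kappa_0 d_c \ge \rho d_c$ by $\kappa_0\ge\rho$ and $d_c \ge 0$, and $R \ge 0$. This is where the hypothesis on $\kappa_0$ enters.
\item \emph{$x_c = 1$.} The row is literally \eqref{eq:mip-wake}.
\item \emph{$x_u = 1$ for some $u\in Q\setminus\{c\}$, other turbines possibly outside $Q$.} The claim is $d_{c'} - \rho d_c \ge \beta^s_u$. From the flow row, $d_{c'} - \rho d_c \ge (\kappa_0-\rho)d_c + R(d)$. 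Since $x \ge e_u$ and $w \le w^*(x)$, Proposition~\ref{prop:fixpoint}(b),(d) give $d \ge d^*(e_u)$ componentwise. Both terms on the right are non-decreasing in $d$, so they are bounded below by $(\kappa_0-\rho)d^s_{uc} + R(d^*(e_u))$.
\end{itemize}
It remains to identify this last quantity with $\beta^s_u$. Under $e_u$ the cell $c'$ is not the footprint of $u$: the footprint is the single downstream cell and $u\ne c$, so it is the cell downstream of $u$, not $c'$. Hence only the flow row binds at $c'$ in the forward sweep, and $d^s_{uc'} = \kappa_0 d^s_{uc} + R(d^*(e_u))$. This gives exactly $\beta^s_u$, and the same identity shows $\beta^s_u = (\kappa_0-\rho)d^s_{uc} + R(d^*(e_u)) \ge 0$, which yields dominance over \eqref{eq:mip-wake}.

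For maximality I check tightness at the relevant maximal fields.
\begin{itemize}
\item \emph{Empty layout.} $w^*\equiv\wmax_s$ and both sides equal $\wmax_s$.
\item \emph{Layout $e_c$.} The sweep gives $w^*_{c'} = \min\{\text{flow bound},\rho w^*_c\}$. The flow bound is at least $\wmax_s-\kappa_0 d_c\ge \rho w^*_c$ (this uses $\kappa_0\ge\rho$ again), so $w^*_{c'}=\rho w^*_c$ and the row is tight.
\item \emph{Layout $e_u$, $u\neq c$.} Tightness holds by the definition of $\beta^s_u$.
\end{itemize}
These are feasible because $N_{\min}=0\le 1\le N_{\max}$. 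Each point has at most one nonzero $x$ coordinate, so raising $\beta^s_u$ is cut off by $e_u$, and lowering the constant $(1-\rho)\wmax_s$ is cut off by the empty layout (or by $e_c$ for the coefficient on $x_c$).

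For (b), take the uniform point $x_c=\bar{x}$, $z^s_c = \bar x P_{\max}$. Since every lifted row has total lifting at most $A_s$, it suffices that a uniform field $\hat w_s$ satisfies $\hat w_s \le \rho\hat w_s + (1-\rho)\wmax_s(1-\bar x) - A_s\bar x$. This holds for
\[
\hat w_s=\min\Bigl\{\wmax_s,\ \wmax_s(1-\bar x) - \tfrac{A_s\bar x}{1-\rho}\Bigr\}.
\]
The flow, packing and cardinality rows are then checked exactly as in Proposition~\ref{prop:vacuous}.

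The main obstacle is the power row, because $\hat w_s$ may now lie below $v_r$. The hypothesis $\bar x \le \wmax_s/(\wmax_s+v_r+A_s/(1-\rho))$ rearranges to $\hat w_s \ge \bar x v_r \ge 0$. Since $\bar P$ is a concave envelope, is non-negative at $0$, and reaches $P_{\max}$ at $v_r$, its chord gives $\bar P(\hat w_s) \ge (\hat w_s/v_r)P_{\max} \ge \bar x P_{\max}$ whenever $\hat w_s\le v_r$; otherwise $\bar P(\hat w_s) = P_{\max}$. Either way the power block admits $z^s_c=\bar xP_{\max}$. The upper bound $N_{\max}P_{\max}$ is unchanged from Proposition~\ref{prop:vacuous}, which completes (b).
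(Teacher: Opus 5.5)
Your proposal is correct and follows the paper's proof essentially step for step. For validity you use the same three-case split on the clique, handling the $x_u=1$ case by antitonicity together with the observation that the flow row at $c'$ is tight under $e_u$. For maximality you check tightness at the empty and single-turbine layouts, and for (b) you use the same uniform field $\wmax_s-\bar{x}\bigl(\wmax_s+A_s/(1-\rho)\bigr)$ with the chord argument for $\bar{P}$.

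There are only two cosmetic slips. Your opening deficit form of \eqref{eq:lifted} is garbled; it should read $d^s_{c'}\ge\rho\, d^s_c+(1-\rho)\wmax_s x_c+\sum_{u\in Q\setminus\{c\}}\beta^s_u x_u$. Also, tightness at $e_c$ holds simply because the plane of $c$ is undisturbed there ($d^s_c=0$ and no lateral deficit), not because of $\kappa_0\ge\rho$.
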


\begin{proof}
(a) At most one variable of $Q$ equals one. If $x_c = 1$, row
\eqref{eq:lifted} reads $w^s_{c'} \le \rho w^s_c$, which is
\eqref{eq:mip-wake}. If $x \equiv 0$ on $Q$, the flow bound
\eqref{eq:mip-flow} at $c'$ gives, bounding the lateral parents by
$\wmax_s$, $w^s_{c'} \le (1-\kappa_0)\wmax_s + \kappa_0 w^s_c$, hence
$w^s_{c'} - \rho w^s_c \le (1-\kappa_0)\wmax_s + (\kappa_0 - \rho)w^s_c
\le (1-\rho)\wmax_s$, using $\kappa_0 \ge \rho$ and $w^s_c \le \wmax_s$.
If $x_u = 1$ for the necessarily unique $u \in Q\setminus\{c\}$, then
$x \ge e_u$ componentwise, and Proposition~\ref{prop:fixpoint}(b,d) caps
every cell by the maximal field of the single-turbine layout $e_u$: each
parent $n_m = \mathrm{up}_s(c',m)$, $m = -2,\dots,2$, of $c'$ obeys
$w^s_{n_m} \le \wmax_s - d^s_{un_m}$, and
$w^s_c \le \wmax_s - d^s_{uc}$. Substituting the caps into the flow bound
at $c'$ and collecting the central channel,
$w^s_{c'} - \rho\, w^s_c \le \gamma \wmax_s +
(1-\gamma)\sum_{m \ne 0} K_m\,(\wmax_s - d^s_{un_m})
+ (\kappa_0 - \rho)\, w^s_c$,
and since $\kappa_0 \ge \rho$ the last term is largest at
$w^s_c = \wmax_s - d^s_{uc}$; rearranging, the right-hand side becomes
$(1-\rho)\wmax_s - (1-\gamma)\sum_m K_m\, d^s_{un_m} + \rho\, d^s_{uc}$,
the sum now over all five parents (parents outside the grid carry the
undisturbed speed, hence no deficit). Since $u \ne c$, the cell $c'$ is not
the footprint cell of $u$, so the forward sweep of
Proposition~\ref{prop:fixpoint}(b) computes the maximal field of $e_u$ at
$c'$ from the flow bound with equality, and the sum equals $d^s_{uc'}$:
this is \eqref{eq:lifted}. Non-negativity: the same recursion, keeping
only its central term, gives
$d^s_{uc'} \ge \kappa_0\, d^s_{uc} \ge \rho\, d^s_{uc}$. Maximality: at
the empty layout $w \equiv \wmax_s$ is feasible and \eqref{eq:lifted} is
tight; at $x = e_c$ the maximal field has $w^s_{c'} = \rho\, w^s_c$,
which is what the row reads; at $x = e_u$ it has
$w^s_{c'} = \wmax_s - d^s_{uc'}$ and $w^s_c = \wmax_s - d^s_{uc}$, and
substitution gives equality again. Raising any coefficient, or lowering
the constant, cuts one of these feasible points.

(b) The value is at most $N_{\max}P_{\max}$, exactly as in
Proposition~\ref{prop:vacuous}. For the converse, put $x \equiv \bar{x}$,
$z^s_c = \bar{x}P_{\max}$ and, for each sector, $w^s \equiv \bar{w}_s$
with $\bar{w}_s = \wmax_s - \bar{x}\,(\wmax_s + A_s/(1-\rho))$; the
condition, rearranged, reads $\bar{w}_s \ge \bar{x}\,v_r \ge 0$, and
$\bar{w}_s \le \wmax_s$ since $A_s \ge 0$ by (a). The flow
bound holds at a uniform field, its right-hand side being at least
$\gamma \wmax_s + (1-\gamma)\bar{w}_s \ge \bar{w}_s$. A lifted row
with total lifting $\Sigma \le A_s$ asks
$\bar{w}_s \le \wmax_s - \bar{x}(\wmax_s + \Sigma/(1-\rho))$, which holds
by construction. For the power block, the largest value of
$\sum_b P_b \lambda^s_{cb}$ compatible with the $\lambda$-system at
$\bar{w}_s$ is $\bar{P}(\bar{w}_s)$: if $\bar{w}_s \ge v_r$ this is
$P_{\max} \ge \bar{x}P_{\max}$, and otherwise concavity of $\bar{P}$ with
$\bar{P}(0) \ge 0$ gives
$\bar{P}(\bar{w}_s) \ge P_{\max}\,\bar{w}_s/v_r \ge \bar{x}P_{\max}$, the
last step being again the condition. Separation and cardinality are
handled as in the proof of Proposition~\ref{prop:vacuous}. The point is
feasible and attains $N_{\max}P_{\max}$.
\end{proof}

Part (b) is the one that matters here: no choice of $\beta$, maximal or
not, improves the vacuous bound---the uniform point retreats to a
slightly lower field and survives. Nor is the hypothesis restrictive on
our own instances: the largest total lifting across the campaigns of
Section~\ref{sec:exp} is $A_s = 11.4$\,m/s,
the resulting density threshold is 0.274
(with $v_r = 10.34$~m/s and $\rho =
0.293$),
and the densest instance we test has
$\bar{x} = 0.222$:  the wind term of
the condition holds with a factor 1.2\ to
spare, and the packing term $1/q_{\max}$ is the one of
Proposition~\ref{prop:vacuous}, so part (b) applies exactly where that
proposition does.

Lifting is acting on the wrong \emph{variables}: it lowers the wind of one
cell, which the relaxation absorbs by retreating to a slightly lower
uniform field, while the objective reads the power variables, which
$z^s_c \le P_{\max}x_c$ keeps priced at rated output however small
$\bar{x}$ is. In our computational experience, the bound remains
vacuous even after adding the family of single-source deficit inequalities
$w^s_c \le \wmax_s - d^s_{uc}\,x_u$ for every source $u$ and every cell $c$
downwind of it, whose validity follows directly from antitonicity
(Proposition~\ref{prop:fixpoint}(d)): they act on the wind of one cell at a
time, and the relaxation absorbs them in the same way. What moves the bound
must couple the layout to the \emph{power} variables, and in aggregate;
that is what the band inequalities of Section~\ref{sec:cuts} do.

\section{Band inequalities}
\label{sec:cuts}

To strengthen the model, we introduce here a family of valid inequalities that couples the layout to
the power variables over sets of cells. It requires no auxiliary
separation problem, and it is generated once per scenario before the
search begins.

\begin{figure}[htbp]
\centering
\includegraphics[width=0.65\textwidth]{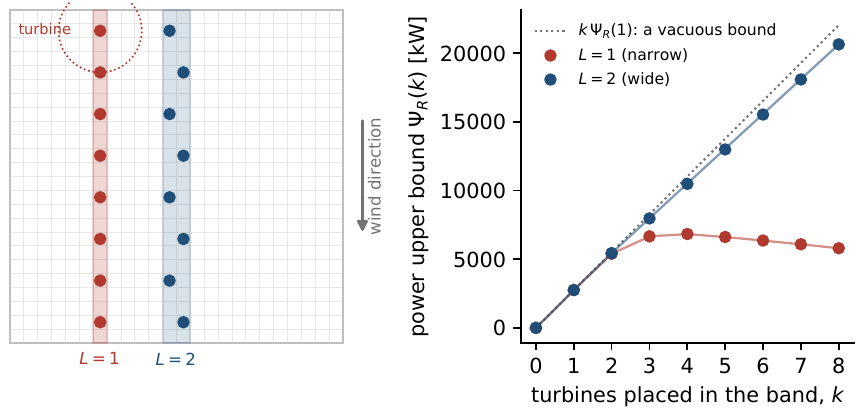}
\caption{Band inequalities on the $24\times24$ instance, north wind,
at the resolution and separation of Section~\ref{sec:exp}. Left: bands of width $L=1$ and $L=2$, each
with the longest chain it can hold. The wide chain
zigzags, so that consecutive
machines sit off each other's axis. Right: the bound $\Psi_R(k)$ and
its upper concave envelope, against the linear bound $k\,\Psi_R(1)$ (dotted).}
\label{fig:bands}
\end{figure}

\paragraph{Band inequalities (BI).}
We first give the rationale of these inequalities, with the help of
Figure~\ref{fig:bands}. Fix a single scenario---say a north wind---and take
a band of consecutive columns of the grid, narrow enough that each row of
the band can hold at most one turbine. Whatever the layout, the turbines
inside the band are then strung along the flow, one per row at most, so
each of them sits downwind of all the others before it. That chain is what
a dynamic program can walk to compute, for each $k$, an upper bound on the
power that $k$ turbines placed in the band could produce. The
resulting function of $k$ is concave-looking but need not be concave, so we
take its piecewise-linear concave envelope; each facet of that envelope is
one linear inequality, bounding the production of the band---the sum of its
$z$ variables---by an affine function of the number of turbines it
holds---the sum of its $x$ variables. The inequality couples the layout to
the power directly, over a set of cells rather than one at a time, which is
what no inequality acting on the wind of a single cell can do
(Section~\ref{sec:lifted}).

To be more specific, recall the \emph{propagation planes} of
Section~\ref{sec:model}---the rows of the grid for a north wind, and in
general the lines of cells orthogonal to the dominant axis. Let a
\emph{band} $R$
be the allocatable cells of a strip of $L$ (say) consecutive cells across the flow, extended over
the whole domain along it, and take the width $L$ below the minimum
separation $\sigma$, expressed in cells. The choice of $L$ is what makes the
construction work:
two cells of the same propagation plane inside $R$ are then at most
$(L-1)\Delta$
apart, closer to each other than the minimum separation allows, so
\emph{at most one of them can carry a turbine}. The turbines of a band are
therefore totally ordered along the flow (each one lies strictly downwind
of the previous) and it is that chain, rather than the two-dimensional
geometry, that the dynamic program below walks. (The geometric
condition is $(L-1)\Delta < \sigma\Delta$, so $L = \sigma$ would still
do, but adds nothing: at that width consecutive turbines can sit
$\sigma - 1$ cells apart across the flow, where at the setting of
Section~\ref{sec:exp} the floor deficit is $0.004$~m/s and $\Psi_R$ is
linear to within three kilowatts per turbine; we generate every width $L = 1, \dots, \sigma - 1$, at every offset
across the flow and for every scenario, which at the separation of
Section~\ref{sec:exp} means widths 1 and 2 at all $n - L + 1$
positions.)

Let $K_R$ be an upper bound on the number of turbines that $R$ can hold,
and $\Psi_R(k)$ an upper bound on the power that $k \le K_R$ turbines
placed inside $R$ can produce. We compute both by dynamic programming over
the cells of $R$ in flow order, with the last installed cell as the state
and, as the transition reward, the power a turbine would produce if the
only deficit it felt were the \emph{floor deficit} of its offset from the
immediate predecessor: the minimum, over all the positions the source can
take in the domain, of the deficit it casts at that along-flow distance
and lateral displacement---formally, with $d^s_{uv}$ the
single-source deficit of Section~\ref{sec:lifted},
\begin{equation}
\underline{\delta}_s(g,d) \;=\; \min\bigl\{\, d^s_{uv} \;:\; u \in
\mathcal{A},\ v \in \mathcal{C},\ v \text{ lies $g$ planes downwind of
$u$ and $d$ cells aside}\,\bigr\},
\label{eq:floor}
\end{equation}
a table filled once per scenario from the $|\mathcal{A}|$ single-source
sweeps, at $O(|\mathcal{A}|\,|\mathcal{C}|)$ time. The minimum is what the lateral boundary
requires: the same offset produces different deficits at different
absolute positions---milder near the edge of the domain, where part of
the wake is lost---and charging anything above the floor could cut off a
boundary-hugging layout. Three deliberate relaxations are thus built in,
and each errs in the safe direction for its own reason. The minimum
separation is enforced between consecutive turbines only, which is a
combinatorial relaxation: it can only enlarge the set of chains the
program walks, so the value it returns can only grow. Each turbine is
charged one deficit, its predecessor's, even though every turbine upwind
contributes to the field it actually faces; this is
Proposition~\ref{prop:fixpoint}(d), since dropping sources can only raise
the maximal field. And that one deficit is charged at its floor, which is
a minimum by construction. The last two relaxations also use that
$\widehat P$ is non-decreasing, which the breakpoints give by construction.
The cost is
$O(|R|^2 N_{\max})$ per band, on top of the deficit table, and the floor deficit depends only on the offset
between two cells, so $\Psi_R$ is a function of the shape of $R$ and not
of its position: one dynamic program per width and scenario serves the
whole family. The recursion is spelled out in the supplementary
material.

With $\Psi_R$ in hand, for every facet $j$ of the \emph{upper} concave
envelope of the points $(k,\Psi_R(k))$, $k = 0,\dots,K_R$, the following
\emph{band inequalities} hold true:
\begin{equation}
\sum_{c \in R} z^s_c \;\le\; a_j \sum_{c \in R} x_c + b_j ,
\qquad
\sum_{c \in R} x_c \;\le\; K_R ,
\label{eq:cut-t2}
\end{equation}
where $(a_j,b_j)$ is the line supporting facet $j$. The quantifier is
worth reading twice: one inequality per facet of the hull, and not one per
pair of consecutive points, because $\Psi_R$ need not be concave and the
chords between consecutive points would then fall below it.

The following statement records that $\Psi_R$ is a genuine upper bound,
and with it the validity of \eqref{eq:cut-t2}.
\begin{proposition}[Validity of the band inequalities]
\label{prop:bi}
Let $R$ be a band of width $L < \sigma$, fix a scenario $s$, and let
$K_R$ and $\Psi_R$ be computed by the dynamic program with a
non-decreasing power representation $\widetilde{P}$ at least as large as
the one the model imposes on $z^s$ ($\widehat{P}$ for model
\eqref{eq:mip-obj}--\eqref{eq:mip-dist}, the exact $P$ for the master of
Section~\ref{sec:bnc}). Then for every feasible layout $x$ with support
$X$ and every $(w^s, z^s)$ feasible for the model at $x$,
$|X \cap R| \le K_R$ and $\sum_{c \in R} z^s_c \le \Psi_R(|X \cap R|)$;
hence \eqref{eq:cut-t2} holds at every feasible point.
\end{proposition}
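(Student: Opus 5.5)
The plan is to fix a feasible layout $x$ with support $X$ and a feasible $(w^s,z^s)$, and to exhibit a path of the dynamic program whose accumulated reward dominates $\sum_{c\in R} z^s_c$. Everything reduces to two facts: the turbines of $X\cap R$ form a chain the program can walk, and the power the model grants each of them is at most the transition reward the program charges it. Validity of \eqref{eq:cut-t2} then follows from the definition of the upper concave envelope.

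\emph{Step 1 (the chain).} Since $L<\sigma$, two cells of $R$ on the same propagation plane are at most $(L-1)\Delta<\sigma\Delta$ apart. The clique rows of \eqref{eq:mip-dist} forbid both from carrying a turbine, because they conflict and the family $\mathcal{Q}$ covers every edge of the conflict graph. Hence the cells of $X\cap R$ lie on distinct planes and can be ordered $c_1,\dots,c_k$ strictly along the flow. Consecutive cells respect the minimum separation, so $(c_1,\dots,c_k)$ is an admissible sequence of states for the program. This gives $k\le K_R$, since $K_R$ is the length of the longest such chain.

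\emph{Step 2 (per-turbine bound).} For cells $c\in R\setminus X$, the row $z^s_c\le P_{\max}x_c$ forces $z^s_c=0$. For $c_1$, we have $z^s_{c_1}\le\widetilde P(w^s_{c_1})\le\widetilde P(\wmax_s)$, which is the reward of an unshadowed first turbine. For $i\ge2$, $x\ge e_{c_{i-1}}$ componentwise, so by Proposition~\ref{prop:fixpoint}(b) and (d) we get $w^s_{c_i}\le w^{*}(x)_{c_i}\le w^{*}(e_{c_{i-1}})_{c_i}=\wmax_s-d^s_{c_{i-1}c_i}$. By \eqref{eq:floor}, $d^s_{c_{i-1}c_i}\ge\underline\delta_s(g,d)$, where $(g,d)$ is the offset between the two cells. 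Since the model imposes $z^s_c\le\widehat P(w^s_c)\le\widetilde P(w^s_c)$ (or $\le P$ in the master) and $\widetilde P$ is non-decreasing, $z^s_{c_i}\le\widetilde P(\wmax_s-\underline\delta_s(g,d))$. This is exactly the transition reward.

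\emph{Step 3 (summation and envelope).} Summing, $\sum_{c\in R}z^s_c$ is at most the total reward of this particular path with $k$ turbines. That is at most $\Psi_R(k)$, since $\Psi_R(k)$ is the maximum over the larger set of chains the program explores. Each facet line satisfies $a_jk+b_j\ge\Psi_R(k)$ for all integers $0\le k\le K_R$ by construction of the upper envelope. Therefore $\sum_{c\in R}z^s_c\le a_j|X\cap R|+b_j=a_j\sum_{c\in R}x_c+b_j$, and the cardinality inequality is Step~1.

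The step I expect to need the most care is Step~2. It needs two things: that the program's reward uses exactly the predecessor's offset with the floor taken over all source positions, including edge positions; and that Proposition~\ref{prop:fixpoint}(d) is applied to the feasible $w^s$ through the greatest element, not to $w^{*}$ alone. I also have to handle the first-plane and boundary conventions, which leave $d^s_{uv}$ well defined since outside cells read $\wmax_s$. Finally, I will check that the program's recursion, deferred to the supplement, indeed maximises over all chains with consecutive separation only.
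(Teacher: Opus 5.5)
Your proof is correct and follows the paper's argument step for step: the $X\cap R$ cells are strictly ordered along the flow because $L<\sigma$ and the clique rows forbid two turbines on one plane; for $i\ge 2$, Proposition~\ref{prop:fixpoint}(b,d) and \eqref{eq:floor} give $w^s_{c_i}\le w^*_{c_i}(x)\le w^*_{c_i}(e_{c_{i-1}})=\wmax_s-d^s_{c_{i-1}c_i}\le\wmax_s-\underline{\delta}_s(g,d)$; the bounds sum to the reward of a single DP chain, and the upper envelope finishes the argument. One small point: $K_R$ is the largest $k\le N_{\max}$ with $\Psi_R(k)>-\infty$, not the unrestricted longest chain, so your step $k\le K_R$ also needs $k\le|X|\le N_{\max}$ from the cardinality row (the paper raises this in its remark on running the program up to $N_{\max}$).
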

\begin{proof}
Let $X \cap R = \{b_1, \dots, b_k\}$, listed along the flow. Two cells of
$R$ on one propagation plane are at most $(L-1)\Delta < \sigma\Delta$
apart, so no two of the $b_i$ share a plane and the order is strict; and
consecutive $b_{i-1}, b_i$ are two turbines of a feasible layout, so
their offset $(g_i, d_i)$ satisfies $g_i^2 + d_i^2 \ge \sigma^2$, that
is, $(b_{i-1}, b_i)$ is a transition the dynamic program allows.
For $i \ge 2$ we have $x \ge e_{b_{i-1}}$ componentwise, so by
Proposition~\ref{prop:fixpoint}(b,d)
\[
w^s_{b_i} \;\le\; w^{*}_{b_i}(x) \;\le\; w^{*}_{b_i}(e_{b_{i-1}})
\;=\; \wmax_s - d^s_{b_{i-1} b_i} \;\le\; \wmax_s -
\underline{\delta}_s(g_i, d_i),
\]
the last step by \eqref{eq:floor}, while $w^s_{b_1} \le \wmax_s$; and
$z^s_c \le \widetilde{P}(w^s_c)$ with $\widetilde{P}$ non-decreasing,
$z^s_c = 0$ off $X$. Summing over $R$,
$\sum_{c \in R} z^s_c \le \widetilde{P}(\wmax_s) + \sum_{i \ge 2}
\widetilde{P}\bigl(\max\{0, \wmax_s - \underline{\delta}_s(g_i,d_i)\}\bigr)$,
which is the reward the dynamic program assigns to the chain
$b_1, \dots, b_k$, hence at most $\Psi_R(k)$. In
particular $\Psi_R(k) > -\infty$, so $k \le K_R$. Finally the upper
envelope lies above every point $(k, \Psi_R(k))$, and $\sum_{c \in R}
x_c = k$.
\end{proof}
The coefficients $(a_j, b_j)$ are floating-point numbers, with $b_j =
\Psi_R(k_1) - a_j k_1$ anchored at the left vertex $k_1$ of the facet, so
the facet reproduces $\Psi_R(k_1)$ there to within a single floating-point
rounding (about $10^{-16}$ in relative terms); the
representation error thus lies orders of magnitude below the default
feasibility tolerance of the solver, $10^{-6}$, and certificates are
reported to the kilowatt; in exact rational arithmetic the facets
fall below the points $(k, \Psi_R(k))$ they support by at most
$9.1\times10^{-13}$~kW, never more than one unit in the last place.
On the nine instances of
Table~\ref{tab:scaling}, under both power representations the paper uses
(the SOS2 interpolant of the monolithic model and the true curve of the
branch-and-check master), band inequalities were evaluated
1\,054\,476\ times on feasible integer layouts---random ones,
and rows and columns packed at the minimum separation at every offset,
the only ones that fill a band to capacity---and none was violated.
Note that the dynamic
program must be run up to $k = N_{\max}$, not to a smaller horizon: a
truncated program returns the horizon itself as $K_R$, and
\eqref{eq:cut-t2} would then forbid a band from holding as many turbines as
it can; running it to $N_{\max}$ costs nothing, since
\eqref{eq:mip-dist} already forbids more.

Two details decide validity: the envelope of the points
$(k,\Psi_R(k))$ must be the \emph{upper} hull, the lower one cutting off
feasible layouts; and $\Psi_R$ must be evaluated with a power
representation at least as large as the one the model uses---the SOS2
interpolant $\widehat{P}$ for the monolithic model, the true curve $P$
only in the master of Section~\ref{sec:bnc}, whose power representation
is the oracle itself.

\section{Branch-and-check with upwind cuts}
\label{sec:bnc}

Every certificate the inequalities of Section~\ref{sec:cuts} can produce
bounds the objective of the EndoWake layout \emph{model}, which the piecewise-linear
representation inflates above the exact energy. This section carries the
certification onto the exact energy scale. Installation costs stay in the objectives
below and enter nothing else: the oracle and the cuts see energy alone.
The instrument is decomposition.

\subsection{The two-stage reading, and an oracle that costs less than an LP}
\label{sec:twostage}

The multi-scenario model is block-angular. The binary layout variables $x$
are the first stage, common to all scenarios; the flow and power variables
$(w^s,z^s)$ form $|\mathcal{S}|$ independent second-stage blocks, coupled
to the first stage only through the right-hand side of the wake constraints
\eqref{eq:mip-wake}; and the objective is
$\max_x \bigl( \sum_{s} p_s\,Q_s(x) - \sum_{c \in \mathcal{A}} C_c\,x_c
\bigr)$, with
\begin{equation}
Q_s(x) \;=\; \max \Bigl\{ \textstyle\sum_{c} z^s_c \;:\;
(w^s,z^s) \text{ satisfies \eqref{eq:mip-flow}--\eqref{eq:mip-power} for
the given } x \Bigr\}.
\label{eq:subproblem}
\end{equation}
We use the two-stage language for its algorithmic content, not to claim a
decision-theoretic one: the $w^s$ are variables whose maximal value $w^{*}(x)$, the only one the objective
ever uses, is determined by $x$, as Proposition~\ref{prop:fixpoint} makes precise, and the scenarios
are a known distribution rather than an uncertainty revealed over time.

\begin{proposition}[Subproblem in linear time]
\label{prop:oracle}
For fixed binary $x$ and any scenario $s$, the value $Q_s(x)$ is computable
exactly in $O(|\mathcal{C}|)$ time, under either representation of the
power curve: one forward sweep of Proposition~\ref{prop:fixpoint}(b)
followed by the evaluation of the power function at the installed cells.
\end{proposition}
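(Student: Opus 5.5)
The plan is to reduce $Q_s(x)$ to the maximal field of Proposition~\ref{prop:fixpoint} and then read off the power cell by cell. Fix a binary $x$ and a scenario $s$. For fixed $w^s$, the power block \eqref{eq:mip-power} decouples by cell. Its largest admissible $z^s_c$ is $\min\{\widetilde{P}(w^s_c),\,P_{\max}x_c\}$, where $\widetilde{P}$ is the representation in use. For the monolithic model this is $\widehat{P}$: under SOS2 the pair $(w^s_c,\sum_b P_b\lambda^s_{cb})$ ranges exactly over the graph of the interpolant, as stated after the model. For the master it is the exact curve $P$, supplied by the oracle. Since $x$ is binary, this value equals $\widetilde{P}(w^s_c)$ when $x_c=1$ and $0$ when $x_c=0$, using $\widetilde{P}\le P_{\max}$. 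Hence
\[
Q_s(x)=\max_{w\in\mathcal{W}(x)}\ \sum_{c\in X}\widetilde{P}(w_c),
\]
where $X$ is the support of $x$ and $\mathcal{W}(x)$ is the feasible set of \eqref{eq:mip-flow}--\eqref{eq:mip-wake}.

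The objective $w\mapsto\sum_{c\in X}\widetilde{P}(w_c)$ is non-decreasing in $w$ on $[0,\wmax_s]^{\mathcal{C}}$. This holds because $\widetilde{P}$ is non-decreasing on $[0,\wmax_s]$, which is the standing assumption $\wmax_s\le v_{\text{cut-out}}$ together with the symbol-table remark for both $\widehat{P}$ and $P$. By Proposition~\ref{prop:fixpoint}(c), the maximum is therefore attained at $w^*(x)$, so $Q_s(x)=\sum_{c\in X}\widetilde{P}(w^*_c(x))$. By Proposition~\ref{prop:fixpoint}(b), $w^*(x)$ is obtained by one forward sweep in plane order. Each cell evaluates the flow bound, which takes five multiply--adds (out-of-grid parents read $\wmax_s$), and the wake bounds from upwind cells whose footprint contains it. The footprint has at most $2h+1$ cells, so each cell receives $O(1)$ wake bounds at fixed resolution. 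The sweep therefore costs $O(|\mathcal{C}|)$.

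The last step is to evaluate $\widetilde{P}$ at the $|X|\le|\mathcal{A}|\le|\mathcal{C}|$ installed cells. A breakpoint lookup is $O(1)$ if the breakpoints are treated as a fixed datum, or $O(\log B)$ by bisection, which is absorbed into the constant. For the exact curve it is a single function evaluation. The total is $O(|\mathcal{C}|)$.

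I expect the main obstacle to be the first step under the SOS2 representation. I need to check that, with $x$ integral, the SOS2 block admits exactly $z\le\widehat{P}(w)$, with no slack from non-adjacent $\lambda$ weights, so that the decoupling yields $\widehat{P}$ rather than $\bar{P}$. I also need to confirm that monotonicity of $\widetilde{P}$ holds on the whole range $[0,\wmax_s]$, since part (c) requires it. Both facts are already asserted in the model description, so I would cite them rather than reprove them.
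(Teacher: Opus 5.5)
Your proposal is correct and follows the paper's own argument. The paper's proof is the one-line version of yours: it invokes Proposition~\ref{prop:fixpoint}(b,c) to place the maximum of \eqref{eq:subproblem} at the maximal field computed by the sweep, and then notes that the power block decouples cell by cell. Your extra steps are sound and supply details that line leaves implicit: eliminating $z^s$ via $\min\{\widetilde{P}(w^s_c),P_{\max}x_c\}$, checking monotonicity of $\widetilde{P}$ on $[0,\wmax_s]$, and accounting for the wake bounds and power lookups per cell.
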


\begin{proof}
By Proposition~\ref{prop:fixpoint}(b,c), the maximum of
\eqref{eq:subproblem} is attained at the maximal field $w^{*}(x)$, which
the sweep computes; the power block then decouples cell by cell.
\end{proof}

Which decomposition fits depends on the power representation. Were the
power curve replaced by a concave overestimator, the second stage would be
a linear program, $Q_s$ concave over the relevant relaxation, and classical
Benders would apply. With the curve as it is---interpolated under SOS2
conditions in the model, exact in the oracle---the second stage is
non-concave and discontinuous at the cut-in speed, supports no LP dual
multipliers, and one is in the territory of integer recourse: the integer
L-shaped method of \citet{laporte1993}, or a
logic-based scheme in which the subproblem is an oracle rather than a
source of dual information~\citep{hooker2003}. Our setting is an extreme case of the
latter, and an unusually favorable one: the subproblem is not merely easy
to solve, it is a function evaluation, exact and cheaper than the linear
program it replaces. This is what makes
branch-and-check~\citep{thorsteinsson2001} (solve the master once, while calling
the oracle at every incumbent) the natural algorithm here.

\subsection{The master}
\label{sec:master}

Let $P_s(\cdot)$ denote the \emph{true} power curve of scenario $s$ (so
that $P_s(\wmax_s)$ is the free-stream power of one machine), and let
$p^{\mathrm{ex}}_c(X) = P_s\bigl(w^{*}_c(X)\bigr)$ be the exact power of
cell $c$ under the maximal field of the layout with support
$X \subseteq \mathcal{A}$. The master keeps the site variables, the
packing constraints, the band inequalities, and one hypograph variable per cell and scenario, and
drops everything else:
\begin{align}
\max\ & \sum_{s \in \mathcal{S}} p_s \sum_{c \in \mathcal{A}}
\epivar^s_c \;-\; \sum_{c \in \mathcal{A}} C_c\, x_c \notag\\
\text{s.t.}\quad
& \text{the packing and cardinality constraints \eqref{eq:mip-dist}},
\notag\\
& \text{the band inequalities \eqref{eq:cut-t2}, with $\Psi_R$ built on
the true curve $P_s$ and $\epivar^s_c$ in place of $z^s_c$}, \notag\\
& \epivar^s_c \;\le\; P_s(\wmax_s)\, x_c,
\qquad c \in \mathcal{A},\ s \in \mathcal{S},
\label{eq:master-cap}\\
& \epivar^s_c \;\in\; [0,\, P_s(\wmax_s)],
\qquad c \in \mathcal{A},\ s \in \mathcal{S}. \notag
\end{align}
The wind flow and power blocks are gone, and with them
\eqref{eq:mip-flow}--\eqref{eq:mip-power} and the big-$M$ of
\eqref{eq:mip-wake}: no constraint of the master carries $\wmax_s$ as a
coefficient. Building $\Psi_R$ on the true curve is legitimate here because the model's power representation \emph{is}
the oracle. The band inequalities give the master a root bound on the
exact energy scale; the cuts below drive that bound down to the exact optimum.

\subsection{The oracle, and posting the incumbent at its exact value}
\label{sec:posting}

At every master solution integer with respect to $x$, with support
$X = \{c \in \mathcal{A} : x_c = 1\}$, the oracle scores the layout
exactly: one sweep per
scenario returns $w^{*}(X)$, hence $p^{\mathrm{ex}}_c(X)$ for every
$c \in X$ and the exact energy $\sum_s p_s \sum_{c\in X}
p^{\mathrm{ex}}_c(X)$. Two things are done with that number. First, any
cell whose hypograph exceeds its exact power is cut off by the upwind cut
below. Second, the layout is \emph{posted} to the solver as a candidate incumbent
at its exact value (rather than letting the master's optimistic hypographs
stand for it) so that pruning acts on the exact energy scale, up to
the flagging tolerance, from both sides.

The posted point, $x = \mathbf{1}_X$ with $\epivar^s_c =
p^{\mathrm{ex}}_c(X)$ on $X$, satisfies every constraint the master
holds (Proposition~\ref{prop:bi} with $\widetilde{P} = P$,
Proposition~\ref{prop:cone}), so the solver adopts it whenever it
improves on its incumbent; a rejected post costs nothing, the best exact
energy being tracked outside the solver.

\subsection{The upwind cut}
\label{sec:cone}

Fix a scenario $s$, a cell $c$ of the incumbent support $X$, and any set
$U \subseteq \mathcal{A}\setminus\{c\}$ of allocatable cells---in
practice, cells of $X$ upwind of $c$. Let
$q = P_s\bigl(w^{*}_c(U \cup \{c\})\bigr)$
be the exact power of $c$ when a turbine is installed in each cell of
$U \cup \{c\}$ \emph{and nowhere else}, and impose the \emph{upwind
cut} $\epivar^s_c \le q\, x_c +
\bigl(P_s(\wmax_s) - q\bigr) \sum_{u \in U} (1 - x_u)$.

\begin{proposition}[Validity of the upwind cut]
\label{prop:cone}
Under the standing assumption of Section~\ref{sec:model}, for every
scenario $s$, every cell $c \in \mathcal{A}$, and \emph{every}
set $U \subseteq \mathcal{A}\setminus\{c\}$, the upwind cut is
satisfied by every binary point of the master in which each
$\epivar^s_c$ is bounded by the exact power of $c$ under the layout, which
is what a lazy constraint must respect.
\end{proposition}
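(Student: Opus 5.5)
The plan is a case split on the binary point $(x,\epivar)$, writing $X$ for the support of $x$. Throughout we use the standing hypothesis $\epivar^s_c \le P_s\bigl(w^{*}_c(X)\bigr)$ on $X$ and $\epivar^s_c = 0$ off $X$; the latter is forced by \eqref{eq:master-cap}. We first record that $q \le P_s(\wmax_s)$. This holds because $w^{*}_c(U\cup\{c\}) \le \wmax_s$ by Proposition~\ref{prop:fixpoint}(a,b), and $P_s$ is non-decreasing on $[0,\wmax_s]$ by the standing assumption. Hence the coefficient $P_s(\wmax_s)-q$ is non-negative, and the right-hand side of the cut is a sum of non-negative terms.

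The first two cases are immediate. If $x_c = 0$, then $\epivar^s_c = 0$ and the right-hand side is non-negative, so the cut holds. If $x_c = 1$ and some $u \in U$ has $x_u = 0$, the right-hand side is at least $q + (P_s(\wmax_s) - q) = P_s(\wmax_s)$. This dominates $\epivar^s_c$ by its upper bound in the master, or equivalently by the exact power, which is at most free-stream power by monotonicity.

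The remaining case carries the content: $x_c = 1$ and $x_u = 1$ for every $u \in U$, so that $U \cup \{c\} \subseteq X$ and the cut reads $\epivar^s_c \le q$. Here we invoke antitonicity, Proposition~\ref{prop:fixpoint}(d). The indicator of $X$ dominates the indicator of $U\cup\{c\}$ componentwise, so $w^{*}_c(X) \le w^{*}_c(U\cup\{c\})$. Monotonicity of $P_s$ on $[0,\wmax_s]$ then gives $P_s(w^{*}_c(X)) \le q$, and the hypothesis on $\epivar^s_c$ closes the case. Nothing in this argument uses that $U$ lies upwind of $c$ or is contained in the incumbent, which is why the statement holds for every $U$; cells of $U$ downwind of $c$ simply do not affect $w^{*}_c$.

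The only delicate point is making the monotonicity of $P_s$ legitimate on the exact curve, which is not concave and jumps at cut-in. Only monotonicity is needed, and it holds on $[0,\wmax_s]$ by the assumption $\wmax_s \le v_{\text{cut-out}}$. Since every maximal field lies in that interval by Proposition~\ref{prop:fixpoint}(a), both comparisons above are taken within the monotone range. I expect no further obstacle, since validity reduces entirely to parts (a) and (d) of Proposition~\ref{prop:fixpoint}.
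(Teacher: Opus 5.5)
Your proposal is correct and follows the paper's proof essentially step for step. You first bound $q \le P_s(\wmax_s)$ because the maximal field never exceeds the free stream and $P_s$ is non-decreasing there. You then use the same three-case split on $x_c$ and on $U$, with antitonicity (Proposition~\ref{prop:fixpoint}(d)) carrying the case $U \cup \{c\} \subseteq X$ and \eqref{eq:master-cap} handling the other two.
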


\begin{proof}
Let $x$ be binary with support $X$, and note first that
$0 \le q \le P_s(\wmax_s)$: the maximal field never exceeds the free
stream, and $P_s$ is non-decreasing on the modeled range by the standing
assumption of Section~\ref{sec:model}. Three cases. If
$x_c = 0$, then $\epivar^s_c = 0$ because of \eqref{eq:master-cap}, and
the right-hand side of the cut is non-negative. If $x_c = 1$ and
$x_u = 0$ for some $u \in U$, the right-hand side is at least
$q + (P_s(\wmax_s) - q) = P_s(\wmax_s)$, which \eqref{eq:master-cap}
already grants. If $x_c = 1$ and $x_u = 1$ for all $u \in U$, then
$X \supseteq U \cup \{c\}$, and antitonicity
(Proposition~\ref{prop:fixpoint}(d)) gives
$w^{*}_c(X) \le w^{*}_c(U \cup \{c\})$, hence
$\epivar^s_c \le p^{\mathrm{ex}}_c(X) \le q$, which is the right-hand
side. \end{proof}

The upwind cuts are logic-based \emph{optimality} cuts of the strengthened
no-good type, and three published constructions frame them. The no-good
cuts of \citet{zhang2014} carry the whole incumbent
in their support and a big-$M$ constant; ours restrict the support to the
upwind cells and replace the big-$M$ by the oracle value at
$U \cup \{c\}$, which antitonicity makes exact. The simulation-based
Benders cuts of \citet{forbes2024} and the disaggregated integer L-shaped
cuts of \citet{parada2024} strengthen optimality cuts under a
\emph{monotone} recourse; our recourse is antitone in the layout, the
disaggregation is per cell and scenario rather than per scenario, and the
strengthened constant is exact (in the model's arithmetic) rather
than conservative because
Proposition~\ref{prop:fixpoint}(d) is a theorem about the model.

Three points deserve comment. First, the constant $q$ must be
evaluated at $U \cup \{c\}$ and not at the whole incumbent $X$.
Second, separation is by comparison: a cut is called for at the pairs $(s,c)$
whose hypograph value exceeds $p^{\mathrm{ex}}_c(X)$, and the cut
separates whenever that value exceeds $q$. In practice $U$ holds
the cells of $X$ whose single-source deficit at $c$ is positive; a lemma
in the supplementary material shows that $q$ then exceeds
$p^{\mathrm{ex}}_c(X)$ only by the deficits below the code's threshold,
at most $1.1\cdot 10^{-9}$~kW here, so every flagged pair is
separated; and should it not, the callback falls back to $U = X
\setminus \{c\}$, whose constant is $p^{\mathrm{ex}}_c(X)$ by
construction---a case that arose 0\ times in
the 45\ runs of Section~\ref{sec:exp-bnc}
that count it, and never in 4\,414\,254\ earlier
cuts. Third,
restricting the support to the
cells upwind of $c$ is what makes the cut survive: an optimality cut
carrying the whole layout in its support---the form of the no-good cuts
of \citet{zhang2014}---is active only within unit Hamming distance of the
incumbent, and so deletes everything it constrains, whereas
the upwind cut stays active on every layout containing $U \cup \{c\}$.
The restriction is not nominal: over the final layouts of the runs
of Table~\ref{tab:bnc} the support $U$ averages 2.8\ cells
against 12.9\ in $X \setminus \{c\}$.
For the same reason the upwind cut dominates, a fortiori, the integer
L-shaped optimality cut of \citet{laporte1993} in this setting: that cut
is the full-support upwind cut weakened further
by a term in the cells outside $X$, so it binds at $X$ alone. With $q = P_s(\wmax_s)$ the cut is \eqref{eq:master-cap} itself;
a weaker valid constant only weakens it.

The scheme as a whole---the callback in pseudocode, and a
proof that it adds finitely many cuts and terminates at the exact
optimum (in exact arithmetic)---is given in the supplementary material.

\section{Computational experiments}
\label{sec:exp}

In this section we report our computational experience with the exact
methods of this paper: the size and the difficulty of the monolithic
model, the effect of the band inequalities at the root and inside the
search, and the branch-and-check campaign. As the paper focuses on exact methods, heuristics at industrial
size and the external validation of the model against engineering wake
simulators are outside its scope.

\paragraph{Setup.} All experiments consider a turbine of the 3\,MW class
with
rotor diameter $D=112$\,m, cut-in $3$\,m/s and cut-out $25$\,m/s. To avoid
proprietary power-curve data we adopt the idealized curve
$P(v) = \min\{\tfrac12 \rho_{air} A C_p v^3, P_{\max}\}$ (the cubic
term converted to kW) between cut-in
and cut-out, $P(v) = 0$ outside, with $C_p = 0.45$,
which rates at $10.34$\,m/s, with
$\rho_{air} = 1.225$\,kg/m$^3$, $A = \pi D^2/4$ and $P_{\max} = 3000$\,kW. The
grid spacing is
$\Delta = 1\,D$, the minimum separation $3\,D$ (three cells), the undisturbed speed
$10$\,m/s, and installation costs are zero throughout ($C_c = 0$). The
propagation parameters are those of the calibration of
Section~\ref{sec:model}:
$\alpha = 0.05324$,
$\beta = 0$ and
$\gamma = 0.11859$, and the lumped source strength
is $2a = 0.707$, hence
$\rho = 0.293$. Scenario sets are uniform: the $|\mathcal{S}|$ sector
directions are equispaced starting from the grid's north, every sector
carries probability $1/|\mathcal{S}|$, and the free-stream speed is the
same in all of them, so no sector exceeds the cut-out speed and the
standing assumption of Section~\ref{sec:model} holds. Every campaign reported below uses the
same four sectors, the axial rose at $0$, $90$, $180$ and $270$
degrees. Four axial sectors
keep every direction aligned with a grid axis, where the stencil is
calibrated rather than interpolated; with eight, four of them would be
oblique at $45$ degrees and the flow block would carry a discretization
error of its own, unrelated to the optimization we are measuring.
Every campaign runs on one ladder of instances, a square
$n \times n$ grid of side $n\,D$ with $N_{\max} = n + 2$ turbines allowed, so that the
density decreases slowly with size and the minimum separation alone never
fixes the layout; grid and $N_{\max}$ are stated with each table.
We work on this synthetic ladder rather than on a published
benchmark for a reason of method. The discrete benchmarks of the
mathematical-programming line---the $10\times10$ grid of
\citet{mosetti1994} and \citet{grady2005}, with cells of five diameters,
on which \citet{turner2014} and \citet{zhang2014} report their
models---and the continuous case studies of the engineering line---the
IEA Task~37 cases of \citet{baker2019}---are all posed on a pairwise wake model
under a fixed superposition rule, and are attacked by heuristics,
gradients or interference-matrix MILPs. No certified optimum exists on
any of them for a formulation of the present kind, so a comparison of
objective values would measure the wake models rather than the methods.
All
runs were performed on a single cluster node,
with an Intel Xeon E3-1220~V2 processor at $3.10$\,GHz---four physical
cores, hyper-threading disabled so that timings are reproducible---and
$14$\,GB of RAM, under Rocky Linux~8.10. The model is built in
Python~3.11 through \texttt{gurobipy} and solved by
Gurobi~13.0~\citep{gurobi}. The power curve enters the monolithic model as
the interpolant $\widehat{P}$ under the SOS2 conditions
of~\eqref{eq:mip-power}, on 6\ segments
up to $v_r$ and a flat one beyond ($B = 8$
breakpoints, listed in the supplementary material); only the oracle and
the evaluator of the returned layouts use the exact curve.
Every solve is given four threads and a budget of $3600$
seconds;
the monolithic solves are run to a relative optimality tolerance of $10^{-6}$.
The monolithic campaigns are run with Gurobi's highest
\texttt{NumericFocus} because of the big-$M$ constant $\wmax_s$ in the wake rows
\eqref{eq:mip-wake}---the distance rows carry none; the
branch-and-check master, which has no wake rows, runs at default
tolerances (relative gap $10^{-4}$: on every instance it closes,
the final bound exceeds the incumbent's exact energy by less than
0.01~kW). Every numerical result reported
here was produced by executing the code of the replication
package~\citep{repo}.

The instances are deliberately symmetric: a square grid, an axial rose
with equiprobable sectors of equal speed, no installation cost. The choice
is conservative: Proposition~\ref{prop:vacuous} depends neither on the
sectors nor on the calibrated kernel, so the bound it describes is
structural, not an artifact of symmetry, and the axial rose is the case
the on-axis calibration covers exactly.

\paragraph{Performance variability.} Every run of our computational
campaign reported in the tables below and run to a time limit was repeated five times with different Gurobi random seeds.
Every table of time-limited runs below reports the \emph{median run}
of the five: for each instance, and for each configuration where a table
compares two, the run whose certificate on the exact energy scale is the
median of its group, with the solve time breaking ties. Each entry is
therefore one run, reproducible by rerunning that seed, rather than a
column-wise average. The tables for all five seeds are in the
supplementary material. For the
branch-and-check the instances that close do so with every seed, and
on the open ones the spread between the best and the worst seed is
between 0.4\ and
1.3\ percentage points. For the monolithic model it is not: across
seeds the solver gap of one instance spans as much as
131.41\ percentage points, and in the
band-placement runs below the spread of the certificate reaches
17\ points on
$20\times20$\unskip, so on the largest grids whether
the search reaches a good incumbent inside the hour depends on the seed.

\paragraph{Reading the tables.} Column \emph{UB} reports the dual bound of
the model, whose power representation $\widehat{P}$ is an upper
interpolation of the exact curve (it majorizes it); UB is therefore an
upper bound on the exact optimum as well, as we are maximizing.
Column \emph{AEP} (annual energy production) gives the value of the
returned layout under
the exact (that is, non-interpolated) power curve.  Both columns are expected power in kW; annual
energy differs by a constant factor. Column \emph{gap} is the solver's own gap at
termination, so a value above the tolerance means that the time limit was
reached; $n_t$ is the number of turbines the returned layout installs,
which need not saturate $N_{\max}$. Finally, column \emph{cert} is the
conservative certificate $(UB-\text{AEP})/\text{AEP}$: it bounds solver gap
and interpolation bias jointly, since $UB$ is proved on the model and the
model overestimates, and where the instance closes it is the interpolation
bias alone.

\begin{table}[htbp]
\centering\small
\caption{Scalability of the unstrengthened monolithic model, $3600$~s
time limit.}
\label{tab:scaling}
\begin{tabular}{lrrrrrrrrrrr}
\toprule
grid & $N_{\max}$ & $n_t$ & bin & cont & rows & $t$\,[s] & gap & nodes & UB & AEP\,[kW] & cert \\
\midrule
6$\times$6 & 8 & 4 & 36 & 1440 & 876 & 9.5 & 0.00\% & 3\,591 & 10777 & 10598 & 1.70\% \\
7$\times$7 & 9 & 6 & 49 & 1960 & 1\,197 & 19.6 & 0.00\% & 8\,900 & 14629 & 14347 & 1.97\% \\
8$\times$8 & 10 & 8 & 64 & 2560 & 1\,568 & 173.5 & 0.00\% & 60\,741 & 18391 & 18022 & 2.05\% \\
10$\times$10 & 12 & 11 & 100 & 4000 & 2\,460 & 3471.5 & 0.00\% & 453\,348 & 26248 & 25689 & 2.18\% \\
12$\times$12 & 14 & 14 & 144 & 5760 & 3\,552 & 3600.0 & 16.27\% & 125\,227 & 38738 & 32608 & 18.80\% \\
14$\times$14 & 16 & 16 & 196 & 7840 & 4\,844 & 3600.0 & 17.67\% & 90\,262 & 44617 & 37121 & 20.20\% \\
16$\times$16 & 18 & 18 & 256 & 10240 & 6\,336 & 3600.0 & 18.03\% & 63\,048 & 50212 & 41650 & 20.56\% \\
20$\times$20 & 22 & 22 & 400 & 16000 & 9\,920 & 3600.0 & 24.14\% & 23\,575 & 61215 & 48309 & 26.72\% \\
24$\times$24 & 26 & 19 & 576 & 23040 & 14\,304 & 3600.0 & 77.75\% & 8\,489 & 72373 & 43038 & 68.16\% \\
\bottomrule
\end{tabular}
\end{table} 

\paragraph{Performance of the unstrengthened monolithic model.} Table~\ref{tab:scaling} frames
what the EndoWake formulation costs to solve as written. Model size grows as predicted (linear in the cells, built in
a fraction of a second) but solving is another matter: the instances up
to $10\times10$\ close to optimality, and from
$12\times12$\ on the solver reaches the time
limit, with the gap
growing to 77.75\%\ at
24$\times$24\unskip. The cause is not the size of the
model but the weakness of its relaxation, which the rest of this section
isolates.
The measured root bound confirms Proposition~\ref{prop:vacuous} to the
digit on every instance that meets its
condition, from $12\times12$\ on.

\subsection{The band inequalities, at the root and inside the search}
\label{sec:exp-cuts}

\paragraph{Which bands to choose.} The BI family of Section~\ref{sec:cuts}
spans every width and every offset, and neither dimension can be trimmed by
inspection. A single band is of little use on its own, whatever its width:
the relaxation spreads its fractional turbines over the neighboring bands
and evades it.  Offsets are more
forgiving: the bands carrying a nonzero dual multiplier at the root are
only 26\%--52\%\ of
those generated, and on all
4\ instances examined they all have their offset
a multiple of their width, so that one partition per
width---77\%\ of the family---reproduces the
root bound 100\%\ exactly.
At the root a single width does as well: on the same instances the
bands of width $L = 2$ alone ($L = 1$ on the smallest) close the whole of
the gap the family closes. We nonetheless generate the full family
throughout: it costs seconds, and what the root does not need the search
may.

How the family enters the model is the decision that turned out to matter.
Adding every BI as a static constraint in the MILP model is the direct
reading of Section~\ref{sec:cuts}, and it tightens the root as intended: on
$16\times16$, $20\times20$, $24\times24$\ the dual bound improves on the plain model by
2.7\%\ to
4.6\%\unskip. The static constraints are not
free, however.
On $16\times16$\ the tightened model explores
9\,793\ nodes against the
63\,048\ of the plain one, and good incumbents
stop coming.

We therefore compared three\ ways of keeping the
inequalities without carrying their weight, all of them through the
solver's user-cut pool, where a constraint is held outside the model and
inserted only when the relaxation violates it: (a) all inequalities pooled;
(b) only those of the offset bands, with the partitions left as static
constraints; and (c) only those the root relaxation leaves with a zero
multiplier, keeping the active ones static. Option (c), selected on a preliminary run, is the one we adopt, and it is the
configuration of the monolithic model in every experiment reported below.
The branch-and-check master of Section~\ref{sec:bnc} keeps its own band
inequalities static, and we verified that this is the right choice there:
the incumbent comes from the oracle rather than from the search, so the
primal difficulty has nothing to gain, while the weaker master loses on
the bound, which is all that is left to gain. Pooling them in the master
costs 1.1\ to 3.0\
percentage points of certificate on the instances that do not close, and
wins on none\ of the
25\ instance-seed pairs. Over the same three grids and
5\ random seeds it uses all available turbines
($N_{\max}$) in
14\ runs out of
15\ and gives the best certificate of the
three configurations compared here---pooled, static, and no inequalities at all---in 7\ of the
15\ instance-seed pairs; the median
certificate on the exact energy scale is
20.0\%\ against
35.8\%\ with the static constraints, and
20.6\%\ with no inequalities at all, and
54.0\%\ against
77.3\%\ and
68.2\%\ on the largest. However, the pool is not
uniformly better: on $12\times12$\unskip, where the
tree does still move the bound, pooling inequalities outside the model
costs dual strength and the static form wins, 11.8\%\ against
13.1\%\unskip. It is on the sizes where the bound
has stopped moving that the choice pays.

\paragraph{The primal side.} What the pool addresses is a primal
difficulty: with the inequalities as static constraints, good incumbents
are slower to come. No method is warm-started with a heuristic layout,
although a greedy start refined by local search costs seconds: the
comparison is between exact methods run stand-alone, and a heuristic
start would help most exactly those that struggle to find a first
incumbent, blurring the very effect being measured.

\begin{table}[htbp]
\centering\small
\caption{Root relaxation of the MILP, with clique distance constraints and
with and without the valid
inequalities of Section~\ref{sec:cuts}. Column \emph{base} is the
untightened formulation; \emph{BI} adds the band inequalities
\eqref{eq:cut-t2} at every width. Gaps are relative to the best known
integer value of the model, in the second-to-last column (over every run
of Tables~\ref{tab:scaling}, \ref{tab:bb} and~\ref{tab:bnc} and every seed); a dagger
marks an instance that the reference solve of Table~\ref{tab:scaling} did
not close within its hour, so the gap there is measured against an
incumbent. The last column is the dual bound Gurobi itself reaches on
\emph{base} at the end of that solve.}
\label{tab:bounds}
\begin{tabular}{lrrrrrrr}
\toprule
grid & $N_{\max}$ & base & gap & +BI & gap & best & B\&B bound \\
\midrule
$6\times6$ & 8 & 12000 & 11.3\% & 10820 & 0.4\% & 10777 & 10777 \\
$7\times7$ & 9 & 23983 & 63.9\% & 16284 & 11.3\% & 14629 & 14629 \\
$8\times8$ & 10 & 27000 & 46.8\% & 20593 & 12.0\% & 18391 & 18391 \\
$10\times10$ & 12 & 36000 & 37.2\% & 30097 & 14.7\% & 26248 & 26248 \\
$12\times12$ & 14 & 42000 & 25.1\% & 36832 & 9.7\% & 33564$^{\dagger}$ & 38738 \\
$14\times14$ & 16 & 48000 & 24.8\% & 42274 & 9.9\% & 38456$^{\dagger}$ & 44617 \\
$16\times16$ & 18 & 54000 & 23.9\% & 47838 & 9.8\% & 43584$^{\dagger}$ & 50212 \\
$20\times20$ & 22 & 66000 & 24.4\% & 59100 & 11.4\% & 53036$^{\dagger}$ & 61215 \\
$24\times24$ & 26 & 78000 & 21.8\% & 70393 & 10.0\% & 64017$^{\dagger}$ & 72373 \\
\bottomrule
\end{tabular}
\end{table} 

It is the band inequalities \eqref{eq:cut-t2} that repair the bound.
Table~\ref{tab:bounds} reports the root relaxation on
9\ instances, before and after the BI
inequalities of Section~\ref{sec:cuts}.
On the 5\ instances that the reference
solve of Table~\ref{tab:scaling} leaves open, the root gap falls from
21.8\%--25.1\%\
to 9.7\%--11.4\%\unskip,
and the effect does not fade with size: on
$24\times24$\ it goes from 21.8\%\ to
10.0\%\unskip. On the
4\ instances that close,
the gap the inequalities leave can be larger, up to
14.7\%\unskip: those grids are denser than
the threshold of Proposition~\ref{prop:vacuous}, and a band of width
below the separation says nothing about two cells that conflict
\emph{across} the flow---the band inequalities repair the wakes, not the
packing.

Generating the family is not a bottleneck: at most 1044\ inequalities are produced over the four scenarios, once and before the search starts.

The most telling comparison in the table, however, is with the solver's own
effort. Consider the 5 instances that the reference solve does not close
within its budget. On \emph{every one} of them, the bound Gurobi reaches after
a full hour of branch and bound on the untightened model is weaker than the
root bound with the band inequalities in place. The root bound wins by
5.0\%\ on $16\times16$\ and by
at least
2.8\%\ on the others, at a generation
cost of 4.1~s against those $3600$. The
formulation hands the solver a
relaxation with nothing in it, and tens of thousands of nodes are spent
recovering by enumeration what one family of inequalities supplies at the
root---up to 125\,227\ nodes on the instances of
Table~\ref{tab:bb} that do not close.

\begin{table}[htbp]
\centering\small
\caption{The band inequalities inside the search, on the instances of
Table~\ref{tab:scaling}: dual bound, certificate on the exact energy scale
and nodes, without (\emph{none}) and with (\emph{+BI}) them, the latter
pooled as described in Section~\ref{sec:exp}. Where an instance closes, the
certificate column gives instead, in italics, the time to prove it.
The certificate is $(UB - \text{AEP})/\text{AEP}$, $UB$ the bound shown and
AEP the exact energy of the run's own incumbent.}
\label{tab:bb}
\begin{tabular}{lrrrrrrr}
\toprule
& & \multicolumn{3}{c}{none} & \multicolumn{3}{c}{$+$BI} \\
\cmidrule(lr){3-5}\cmidrule(lr){6-8}
grid & $N_{\max}$ & bound & cert\,/\,$t$ & nodes & bound & cert\,/\,$t$ & nodes \\
\midrule
$6\times6$ & 8 & 10777 & \emph{10\,s} & 3\,591 & 10777 & \emph{28\,s} & 8\,643 \\
$7\times7$ & 9 & 14629 & \emph{20\,s} & 8\,900 & 14629 & \emph{157\,s} & 57\,200 \\
$8\times8$ & 10 & 18391 & \emph{174\,s} & 60\,741 & 18391 & \emph{197\,s} & 51\,986 \\
$10\times10$ & 12 & 26248 & \emph{3472\,s} & 453\,348 & 26248 & \emph{1185\,s} & 152\,822 \\
$12\times12$ & 14 & 38738 & 18.80\% & 125\,227 & 36747 & 13.11\% & 60\,711 \\
$14\times14$ & 16 & 44617 & 20.20\% & 90\,262 & 42274 & 13.88\% & 41\,457 \\
$16\times16$ & 18 & 50212 & 20.56\% & 63\,048 & 47838 & 19.95\% & 29\,086 \\
$20\times20$ & 22 & 61215 & 26.72\% & 23\,575 & 59100 & 25.09\% & 11\,450 \\
$24\times24$ & 26 & 72373 & 68.16\% & 8\,489 & 70393 & 54.02\% & 6\,257 \\
\bottomrule
\end{tabular}
\end{table} 

A tighter root bound is worth reporting only if it survives contact with
the search, and Table~\ref{tab:bb} answers that question directly. The
table takes the instances of Table~\ref{tab:scaling} and compares each
under a common $3600$~s budget, with and without the inequalities, charging
the generation time to that budget.
On the instances that close either way, up to
$10\times10$\unskip, the band inequalities take the
largest solve from 453\,348\ nodes to
152\,822\unskip, and bring the wall-clock time down by
a factor of 2.9$\times$\unskip, separation included.
On all 5\ instances
that do not close, the median certificate on the exact energy scale falls by
 3\%\  to
31\%\unskip\ of its value; the bound,
whose comparison does not depend on the incumbent, is lower with the
inequalities on every instance and, seed by seed, on all
25\ open instance--seed pairs, the
certificate on 18\ of them. Note that no instance above $10\times10$\ closes
within the budget, even with the band inequalities in place. They move the
frontier rather than removing it; what they establish, however, is that
the frontier was in the relaxation and not in the combinatorics. The
solver's general-purpose cuts act on single rows, whereas what the
relaxation lacks is an aggregate coupling between layout and power along
the flow. A pairwise
surrogate of EndoWake, with deficits combined by a fixed rule, is a
different model with a different optimum; the comparison that is well
defined is one of size, $\Theta(|\mathcal{C}|^2)$ against
$O(|\mathcal{C}|)$ (Section~\ref{sec:model}).

\subsection{Branch-and-check}
\label{sec:exp-bnc}

We report here a campaign of 45\ runs of the
scheme of Section~\ref{sec:bnc}, because it answers the one question the
rest of the paper cannot: what a certificate on the exact power scale
(without interpolation) costs. The runs use the upwind cut with the
constant of Section~\ref{sec:cone} and the exact incumbent posting of
Section~\ref{sec:posting}.

Each returned layout was re-scored outside the callback by the same
propagator, the largest discrepancy from the reported energy being
$0$\,kW---a check on the bookkeeping
rather than on the propagator, the arithmetic being the same by another
route---and each was re-tested against the minimum separation, which
returned 0\ violated pairs. As an
independent check on the propagator, the maximal field of each returned
layout was also recomputed as a linear program with $x$ fixed, an
implementation that shares nothing with the sweep; the two agree to
within $5\cdot 10^{-8}$~m/s, and the exact energy to within
$9\cdot 10^{-6}$~kW.

\begin{table}[htbp]
\centering\small
\caption{Branch-and-check on the instances of Table~\ref{tab:scaling}.
\emph{root} is the master relaxation before any upwind cut, \emph{bound}
the dual bound at the end, \emph{best} the energy of the incumbent under
the exact power curve, and \emph{gap} the difference between the two:
every gap here is on the exact energy scale, not on the model's. Cut
counts are as the solver reports them, with no deduplication, and the
pool-cut mechanism for the BIs is deactivated.}
\label{tab:bnc}
\begin{tabular}{lrrrrrrrrr}
\toprule
grid & $N_{\max}$ & root & bound & best & $n_t$ & gap & nodes &
cuts & $t$ [s] \\
\midrule
$6\times6$ & 8 & 10643 & 10598 & 10598 & 4 & 0.00\% & 121 & 94 & 0.2 \\
$7\times7$ & 9 & 16022 & 14347 & 14347 & 6 & 0.00\% & 472 & 473 & 0.8 \\
$8\times8$ & 10 & 20273 & 18022 & 18022 & 8 & 0.00\% & 2\,717 & 1\,385 & 2.6 \\
$10\times10$ & 12 & 29604 & 25689 & 25689 & 11 & 0.00\% & 46\,111 & 6\,805 & 77 \\
$12\times12$ & 14 & 36262 & 35411 & 32793 & 14 & 7.98\% & 71\,504 & 43\,860 & 3600 \\
$14\times14$ & 16 & 41474 & 41083 & 37867 & 16 & 8.49\% & 46\,007 & 54\,383 & 3600 \\
$16\times16$ & 18 & 46957 & 46856 & 42698 & 18 & 9.74\% & 45\,879 & 60\,032 & 3600 \\
$20\times20$ & 22 & 58075 & 58075 & 52873 & 22 & 9.84\% & 47\,472 & 78\,538 & 3600 \\
$24\times24$ & 26 & 69234 & 69234 & 63429 & 26 & 9.15\% & 65\,156 & 90\,091 & 3601 \\
\bottomrule
\end{tabular}
\end{table} 

Table~\ref{tab:bnc} collects the campaign, one row per instance of the
ladder of Table~\ref{tab:scaling}, on the same machine and under the same
budget as the monolithic runs it is compared against. The scheme closes every instance up to $10\times10$\  to within the tolerances of the
setup above, the largest of them in 77~s
against the 3472~s the monolithic model needs to
close its own model on the same instance
(1185~s with the band inequalities), and where the
monolithic certificate is worth 2.18\%\ on the
exact energy scale the scheme's is zero to the digit. From $12\times12$\ on
no run closes within the hour, so the gaps are what
the budget bought and not what the method can reach: they range from
7.98\%\ to 9.84\%\unskip,
against the
13.11\%--54.02\%\ that the
monolithic model with the band inequalities certifies on the same
instances.
The scheme certifies better on every open instance, its bound living
on the exact energy scale and paying no interpolation bias, and the
primal side goes the same way: on each of them the monolithic incumbent
stays below the scheme's, by as much as
27.95\%\ on
$24\times24$\unskip. What stalls the scheme on the large grids is the master's
relaxation, not the search: on the largest grids the final bound is the
root bound, and the scheme spends its hour proving, not improving.
That relaxation is the band inequalities: the same runs with the
band inequalities removed from the master close nothing above
$8\times8$\unskip, and the gap grows on every one
of the 30\ open instance-seed pairs, by
5.7\ to 10.7\ points at
the median.
The support decides where the search can finish: with the same cut
at the full support $U = X \setminus \{c\}$ (the fallback of
Section~\ref{sec:cone}, emitted always, which is the no-good cut of
\citet{zhang2014} with our oracle constant) the same runs close nothing above
$8\times8$\unskip, leaving the
$10\times10$\ at 7.9\%\
after the hour with 5.3$\times$ as many
cuts; where neither support closes, the median gaps stay within
1.0\ points.

Posting the incumbent at its exact value, the component of
Section~\ref{sec:posting} a reader may take for bookkeeping, pays on both
sides of the time limit. On $10\times10$\unskip, the
largest instance of the set that closes either way, posting proves
optimality
in 77~s against
96~s---20\%\ less
time for the same certificate. On the instances that do not close it buys
a better incumbent instead, by up to 0.85\%\ on
$16\times16$\unskip. The mechanism is the same in both
cases: without posting, pruning runs against the optimistic hypographs of
the master rather than against the value the oracle has already computed,
so the search neither recognizes the layout it holds nor uses it to cut.

\section{Conclusions}
\label{sec:concl}

We have introduced EndoWake, a wind-flow model in which the wind
field is a vector of decision variables propagated by linear
constraints, rather than data pre-computed into an interference
matrix, together with the wind farm layout formulation built on it,
and we have addressed the exact solution of the latter.

The result we regard as the most consequential concerns the linear
relaxation, whose root bound we characterize exactly. Under a condition that the
operating densities of practice satisfy comfortably, the root bound equals
the number of turbines times the rated power of one machine---above even
the wake-free production---whatever the number of wind sectors
(Proposition~\ref{prop:vacuous}). Two
consequences follow, neither apparent from the formulation: the encoding of the
minimum separation buys nothing at those densities, as it tightens a structure that is not
the loose one---the 3\ densest grids of our
ladder, where the packing rows bind first, are the exception that marks
it, and the
branch-and-bound gap closes slowly, since the wakes must be discovered by
branching. The natural local repairs fail as well---a maximally lifted
wake constraint provably leaves the bound where it stands, and the
perspective reformulation of the power block lowers it exactly to the
wake-free value and adds nothing on top of the band inequalities.

We have closed part of that gap, and the way it closes is itself
informative. What works is aggregation over the cells a wind direction
orders, which is what the band inequalities do. Imposed at every
band width, these inequalities take the root gap on
$24\times24$\
from 21.8\%\ to 10.0\%\unskip, and on the instances the reference
solve leaves open they beat the bound a state-of-the-art solver reaches
there by branching.

What ties these results together is the structure of the model. The
multi-scenario formulation is block-angular, and its second stage is
evaluated
\emph{exactly} in $O(|\mathcal{C}|)$ time per scenario
(Proposition~\ref{prop:oracle}), so the subproblem is an oracle rather
than a source of dual information. Branch-and-check is the natural target: the wake
implications are delegated to the oracle, and no constraint of the master
carries the free-stream speed as a coefficient. The scheme of Section~\ref{sec:bnc} behaves as the argument
predicts. Its master carries the band inequalities on the true power
curve, its upwind cuts are valid by antitonicity alone, and, on the
evidence of Section~\ref{sec:exp-bnc}, it certifies on the exact energy
scale where the monolithic formulation certifies only an inflated SOS2
surrogate---on the largest instance both close, the monolithic model
with the band inequalities needs 15\ times
as long to close its own model and still certifies the exact energy only to
within its interpolation bias. The sizes closed exactly remain an order of
magnitude below an industrial farm; the bridge to that size is a
matheuristic on the same formulation.

In our view, future research should address two engineering refinements
of the scheme: management of the lazy cut pool, and a refinement of the
incumbent posting in which a local search improves the layout the oracle
has just scored.  Sectors oblique to the grid, for which the on-axis stencil is not
calibrated, and irregular sites remain open. Finally, the band inequalities are one
instance of a recipe that does not require the zone to be a full-length
strip fixed in advance: any set of cells whose lateral extent stays below
the minimum separation has the chain property the dynamic program needs,
so a separator could carve the zone out of the fractional solution.

Nothing in the scheme is specific to wind: it uses a state carried
on a grid as decision variables, propagated by a linear convolution, and discrete
sources whose effect is antitone.
Contaminant transport in a channel, indoor ventilation and emission
control with sensors or filters to place share this structure, and
 are possible candidates for future work.

\subsection*{Funding}

The work of the first author was supported by the project
PID2024-162616OA-I00 funded by MICIU/AEI/10.13039/501100011033/FEDER, UE,
and by the Ram\'on y Cajal project RYC2023-042623-I funded by
MICIU/AEI/10.13039/501100011033 and FSE+.

\subsection*{CRediT authorship contribution statement}

\textbf{Martina Fischetti}: Conceptualization, Data curation,
Investigation, Methodology, Validation, Writing --- original draft.
\textbf{Matteo Fischetti}: Conceptualization, Formal analysis,
Methodology, Software, Writing --- review and editing.

\subsection*{Declaration of competing interest}

The authors declare that they have no known competing financial interests or
personal relationships that could have appeared to influence the work
reported in this paper.

\subsection*{Data availability}

The replication
package~\citep{repo} contains code, instances, layouts, raw campaign
outputs and the table scripts: every table and every number quoted in
the text can be regenerated from it, and no proprietary data are used.

\subsection*{Declaration of generative AI and AI-assisted technologies in
the manuscript preparation process}

During the preparation of this work the authors used Claude Opus~5 and
Claude Fable~5 (Anthropic) as programming assistants for the experimental
code, as adversarial readers of the mathematical statements, and to edit
the manuscript for language and structure; independent adversarial audits
 were also obtained from ChatGPT~5.6 Sol (OpenAI). After
using these tools, the authors reviewed and edited the content,
checked every proof independently, and take full responsibility for the
content of the published article.

\setlength{\bibsep}{0pt}

\clearpage
\setcounter{section}{0}\renewcommand{\thesection}{S\arabic{section}}
\setcounter{table}{0}\renewcommand{\thetable}{S\arabic{table}}
\setcounter{figure}{0}\renewcommand{\thefigure}{S\arabic{figure}}
\setcounter{equation}{0}\renewcommand{\theequation}{S\arabic{equation}}
\setcounter{algorithm}{0}\renewcommand{\thealgorithm}{S\arabic{algorithm}}
\setcounter{proposition}{0}\renewcommand{\theproposition}{S\arabic{proposition}}
\setcounter{lemma}{0}\renewcommand{\thelemma}{S\arabic{lemma}}
\setcounter{corollary}{0}\renewcommand{\thecorollary}{S\arabic{corollary}}
\setcounter{remark}{0}\renewcommand{\theremark}{S\arabic{remark}}
\section*{Supplementary material}
\addcontentsline{toc}{section}{Supplementary material}
\noindent This supplement collects the pseudocode of the band dynamic program
of Section~6 of the paper and of the branch-and-check scheme of its
Section~7, with the proof that the latter terminates finitely at the
exact optimum (in exact arithmetic; Proposition~\ref{prop:bnc}
states what survives the solver's tolerances), and the tables of the computational campaign for each of
the five solver seeds. It is part of the
replication package.

\section{The band dynamic program}
\label{sm:banddp}

\paragraph{The breakpoints of $\widehat P$.} The interpolant of
Section~8 of the paper has $B = 8$ breakpoints:
the origin; the two sides of the jump at the cut-in speed, at the cut-in
less $10^{-3}$~m/s and at the cut-in; three speeds equally spaced between
the cut-in and the rated speed $v_r$; $v_r$ itself; and the cut-out speed.
The 6\ segments of the paper are those up to
$v_r$, the seventh being the flat rated stretch. Between the cut-in and
$v_r$ the exact curve is a cubic, so its chords majorize it, which is
what $\widehat P \ge P$ rests on.

Algorithm~\ref{alg:banddp} spells out the dynamic program of Section~6
of the paper, which returns the capacity $K_R$ of a band $R$ and the
table $\Psi_R(0), \dots, \Psi_R(K_R)$ from which the band inequalities
(8) of the paper are built; the floor deficit is (7) there, and the
notation is that of Section~6.

\begin{algorithm}[htbp]
\caption{The band dynamic program, spelled out for a north wind: the
propagation planes are the rows of the grid, and the flow order is the
row order. In any other direction $\mathrm{row}$ and $\mathrm{col}$ become
the index of the propagation plane and the displacement within it, and
nothing else changes.}
\label{alg:banddp}
\begin{algorithmic}[1]
\Require the allocatable cells of the band $R$, of width $L < \sigma$, so that no
propagation plane of $R$ can carry two turbines; the free-stream speed $V$
and the power representation $\widehat P$, non-decreasing (the true
curve $P$ in the master of Section~7 of the paper); the minimum
separation $\sigma$, in cells; the floor deficit
$\underline{\delta}(g,d)$ of every offset ($g$ planes downwind, $d$ cells
aside), from (7) of the paper (finite at every offset the
recursion queries, since the pair $(a,b)$ itself enters the minimum); the global cardinality budget $N_{\max}$, used as the horizon
\Ensure the band capacity $K_R$ and the table $\Psi_R(0),\dots,\Psi_R(K_R)$
\State for $a, b \in R$, write $a \prec b$ when
$g_{ab} = \mathrm{row}(b)-\mathrm{row}(a) \ge 1$
\Statex \hskip\algorithmicindent and
$g_{ab}^2 + d_{ab}^2 \ge \sigma^2$, where
$d_{ab} = \mathrm{col}(b)-\mathrm{col}(a)$
\Comment{$(a,b)$ can be consecutive in a chain}
\State $B_1[b] \gets \widehat P(V)$ for every $b \in R$
\Comment{the first turbine of the chain is charged no wake}
\For{$k = 2, \dots, N_{\max}$}
\For{$b \in R$}
\State $B_k[b] \gets \max\,\bigl\{\, B_{k-1}[a] + \widehat P\bigl(\max\{0,\;
V - \underline{\delta}(g_{ab},d_{ab})\}\bigr) \;:\; a \prec b \,\bigr\}$
\Comment{$\max\emptyset = -\infty$}
\EndFor
\EndFor
\State $\Psi_R(0) \gets 0$; \quad $\Psi_R(k) \gets \max_{b \in R} B_k[b]$
for $k \ge 1$
\State $K_R \gets$ the largest $k$ with $\Psi_R(k) > -\infty$
\end{algorithmic}
\end{algorithm}

\section{The branch-and-check scheme as a whole}
\label{sm:bnc-alg}

Algorithm~\ref{alg:bnc} collects the pieces of Section~7 of the
paper as they are run in its Section~8.2, Lemma~\ref{lem:support}
identifies the support of the upwind cut, and
Proposition~\ref{prop:bnc} records what they guarantee together.
Numbers in parentheses refer to the equations of the paper.

\begin{algorithm}[htbp]
\caption{Branch-and-check with upwind cuts.}
\label{alg:bnc}
\begin{algorithmic}[1]
\Require the master of Section~7.2; the oracle of
Proposition~5; a flagging tolerance $\varepsilon$
($10^{-3}$~kW in our runs); installation costs $C_c = 0$ (with
costs, subtract $\sum_{c \in X} C_c$ from $E$ where it is computed)
\Ensure the best layout found $X^\star$ with its exact energy $E^\star$,
and the final dual bound of the master
\State $E^\star \gets -\infty$; hand the master to the solver with lazy
constraints enabled
\Statex \textbf{at every integer candidate} $(x, \epivar)$ of the solver
(incumbent callback):
\State $X \gets \{c : x_c = 1\}$; for every $s$, one sweep returns
$w^{*}(X)$ and $p^{\mathrm{ex}}_c(X)$ for $c \in X$; $E \gets \sum_s p_s
\sum_{c \in X} p^{\mathrm{ex}}_c(X)$
\For{every $(s, c)$ with $c \in X$ and $\epivar^s_c >
p^{\mathrm{ex}}_c(X) + \varepsilon$}
\Comment{separation by comparison}
\State $U \gets \{u \in X \setminus \{c\} : d^s_{uc} > 0\}$
(Lemma~\ref{lem:support}, Remark~\ref{rem:support}); \
$q \gets P_s\bigl(w^{*}_c(U \cup \{c\})\bigr)$
\State \textbf{if} $q > p^{\mathrm{ex}}_c(X) + \varepsilon$
\textbf{then} $U \gets X \setminus \{c\}$, $q \gets p^{\mathrm{ex}}_c(X)$
\Comment{fallback: the same cut with full support} \label{ln:fallback}
\State add the upwind cut of Section~7.4 of the paper with $(U, q)$
as a lazy constraint
\EndFor
\If{$E > E^\star$} $E^\star \gets E$, $X^\star \gets X$, and queue
$\bigl(X, p^{\mathrm{ex}}(X)\bigr)$ for posting
\EndIf
\Statex \textbf{at the next node} (node callback): if a layout is queued,
post $x = \mathbf{1}_X$, $\epivar^s_c = p^{\mathrm{ex}}_c(X)$ as a
heuristic solution (Section~7.3), which the solver adopts if it
improves on its incumbent: 308\ of the
422\ layouts posted in the 45 runs behind
Table~4 of the paper were adopted, the rest not improving on the
incumbent the solver held
\State \Return $X^\star$, $E^\star$, and the solver's final dual bound
\end{algorithmic}
\end{algorithm}

\begin{lemma}[Support of the field at a cell]
\label{lem:support}
Fix a scenario $s$ and call \emph{propagation graph} of $s$ the directed
graph on $\mathcal{C}$ with an arc $u \to v$ whenever $u =
\mathrm{up}_s(v,m)$ for an offset $m$ with $(1-\gamma)K_m > 0$ (a
\emph{stencil arc}, row (2) of the paper) or $v \in \mathcal{F}_s(u)$ (a
\emph{footprint arc}, row (3)). For a cell $c$ let $\mathrm{Anc}_s(c)$ be
the set of allocatable cells $u$ from whose footprint $c$ is reachable:
some directed path, possibly of length zero, leads from a cell of
$\mathcal{F}_s(u)$ to $c$. Then for every layout $X \subseteq \mathcal{A}$
\[
w^{*}_c(X) \;=\; w^{*}_c\bigl((X \cap \mathrm{Anc}_s(c)) \cup \{c\}\bigr).
\]
\end{lemma}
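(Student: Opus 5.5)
The plan is to prove a stronger statement by forward induction along the propagation planes, as in Proposition~\ref{prop:fixpoint}(b). Fix $s$ and $X$, and put $Y = (X \cap \mathrm{Anc}_s(c)) \cup \{c\}$. Let $D(c)$ be the set of cells of $\mathcal{C}$ from which $c$ is reachable in the propagation graph, $c$ itself included. The claim I would prove is
\[
w^{*}_v(X) = w^{*}_v(Y) \quad \text{for every } v \in D(c);
\]
the lemma is its instance $v = c$.

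\textbf{Step 1: what $T_x(\cdot)_v$ reads.} I would write out $T_x(w)_v$ explicitly as the minimum of two kinds of bound.
\begin{itemize}
\item The flow bound. It reads $w_{\mathrm{up}_s(v,m)}$ with coefficient $(1-\gamma)K_m$. Parents with zero coefficient contribute nothing, so their value is irrelevant; this is exactly why only stencil arcs with $(1-\gamma)K_m > 0$ enter the graph. Parents outside $\mathcal{C}$ contribute the constant $\wmax_s$, the same under both layouts.
\item One wake bound $\rho\,w_u + \wmax_s(1-x_u)$ for each allocatable $u$ with $v \in \mathcal{F}_s(u)$.
\end{itemize}
So $T_x(w)_v$ depends only on the entries $w_p$ for $p \to v$ an arc of the propagation graph, and on the $x_u$ for $u \in \mathcal{A}$ with $v \in \mathcal{F}_s(u)$.

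\textbf{Step 2: the induction.} Take $v \in D(c)$ and assume equality on the cells of $D(c)$ strictly upwind of $v$.
\begin{itemize}
\item Every $p$ with an arc $p \to v$ also reaches $c$, so $p \in D(c)$. It lies strictly upwind, so the hypothesis gives $w^{*}_p(X) = w^{*}_p(Y)$.
\item Every $u$ with $v \in \mathcal{F}_s(u)$ has a path from a cell of its footprint (namely $v$) to $c$, so $u \in \mathrm{Anc}_s(c)$. Hence $u \in X$ iff $u \in Y$, provided $u \ne c$.
\end{itemize}
Both layouts then compute $w^{*}_v = T(w^{*})_v$ from identical inputs. The base case, cells of the first plane, has only out-of-grid parents and free-stream inputs, and is covered by the same argument.

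\textbf{The only delicate point: excluding $u = c$.} Adding $c$ to $Y$ may change $x_c$, so I must rule out that $x_c$ is ever read. If $u = c$ had $v \in \mathcal{F}_s(c)$ with $v \in D(c)$, there would be a directed path from $\mathcal{F}_s(c)$, which lies one plane downwind of $c$, back to $c$. Every arc goes strictly downwind, so such a path cannot exist; this is the acyclicity noted in Section~\ref{sec:model}. Hence $c \notin \mathrm{Anc}_s(c)$, and $x_c$ never enters the computation of $w^{*}$ on $D(c)$.

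This bookkeeping — zero-weight stencil entries, boundary reads and the $c$-exclusion — is where I expect the care to be needed. The induction itself is routine.
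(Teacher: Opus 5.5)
Your proposal is correct and is essentially the paper's own proof. Both argue by induction along the propagation planes over the set of cells from which $c$ is reachable: every arc tail into such a cell lies in that set, every footprint tail lies in $\mathrm{Anc}_s(c)$ so the two layouts agree on it, and acyclicity (footprints lie strictly downwind) keeps $x_c$ from ever being read; your handling of zero-weight offsets and out-of-grid parents matches the paper's treatment of those cases.
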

\begin{proof}
Let $X' = (X \cap \mathrm{Anc}_s(c)) \cup \{c\}$, and let $R$ be the set
of cells from which $c$ is reachable in the propagation graph, together
with $c$. We show $w^{*}_v(X) = w^{*}_v(X')$ for every $v \in R$ by
induction along the propagation planes, which order the graph
topologically (proof of Proposition~1(b) of the paper). The sweep sets
$w^{*}_v$ to the minimum of the flow bound (2), which reads $w^{*}_u$
along the stencil arcs $u \to v$ (offsets of weight zero, and cells
outside the grid at free stream, contribute the same to both sides), and
of the wake bounds (3) at $v$, which read $x_u$ and $w^{*}_u$ along the
footprint arcs $u \to v$. Every such $u$ lies in $R$, since $u \to v$
and $v$ reaches $c$, so $w^{*}_u(X) = w^{*}_u(X')$ by the inductive
hypothesis; and every footprint tail $u$ lies in $\mathrm{Anc}_s(c)$,
since $v \in \mathcal{F}_s(u)$ reaches $c$, so $u \in X$ if and only if
$u \in X'$ ($u \ne c$, the graph being acyclic). Both bounds therefore
agree at $v$, and so does their minimum. A cell of $R$ without incoming
arcs starts the induction with the free-stream flow bound on both sides,
and $c \in R$ closes it; $x_c$ itself enters no bound at $c$, as
$\mathcal{F}_s(c)$ lies downstream.
\end{proof}

\begin{remark}
\label{rem:support}
Algorithm~\ref{alg:bnc} does not build the graph: it takes for $U$ the
cells $u \in X \setminus \{c\}$ whose single-source deficit $d^s_{uc}$
(Proposition~3 of the paper) exceeds $10^{-12}$~m/s.
In exact arithmetic this is $X \cap \mathrm{Anc}_s(c)$ whenever every
footprint arc is also a stencil arc. Under the layout $e_u$ the only
active wake bounds are those at the cells of $\mathcal{F}_s(u)$, where
the deficit is $(1-\rho)\wmax_s > 0$, and from there the deficit spreads
through the flow bound (2) exactly along the stencil arcs; so $d^s_{uc} >
0$ if and only if $c$ is reachable from $\mathcal{F}_s(u)$ through stencil
arcs alone, and the two sets coincide when every offset of the footprint
carries a positive stencil weight. At $\Delta = D$ the footprint is the
single downstream cell and the condition reads $\kappa_0 = (1-\gamma)K_0
> 0$, which holds here ($\kappa_0 = 0.83$). In
floating point the deficit decays at every transversal step and falls
below the threshold, or to machine zero, a few cells off the wake axis,
so the set the code builds is contained in $X \cap \mathrm{Anc}_s(c)$
rather than equal to it. We measured both facts on the instances of the
paper, at 7\,284\ pairs (scenario, cell): the set
of the code lies inside $\mathrm{Anc}_s(c)$ with
0\ exceptions, and over all pairs
$\mathrm{Anc}_s(c)$ holds 288\,896\ cells more,
all with a deficit at $c$ below the threshold. On
270\ feasible layouts
(14\,408\ checks) the identity of the Lemma holds
with $\mathrm{Anc}_s(c)$ with a largest difference of
0~m/s, and with the set of the code of
$2\cdot 10^{-12}$~m/s, that is $1.1\cdot 10^{-9}$~kW
on the constant $q$. The inclusion is the direction the scheme needs:
$U \cup \{c\} \subseteq X$ gives $q \ge p^{\mathrm{ex}}_c(X)$ by
antitonicity (Proposition~1(d) of the paper), the cut is valid for every
$U$ (Proposition~6), and it separates the flagged candidate as soon as
$q - p^{\mathrm{ex}}_c(X) < \varepsilon$, which the figure above leaves
ample room for; the fallback of Algorithm~\ref{alg:bnc}
(line~\ref{ln:fallback}) to the full support $U = X \setminus \{c\}$,
whose constant is $p^{\mathrm{ex}}_c(X)$ by construction, covers the
remaining case.
\end{remark}

\begin{proposition}[Correctness and finite termination]
\label{prop:bnc}
Let $\mathcal{M} = \{(x, \epivar) : x \text{ feasible for
(5)},\ 0 \le \epivar^s_c \le p^{\mathrm{ex}}_c(x)\, x_c\}$,
whose optimal objective is the exact optimum $E^{\mathrm{opt}}$ of the
layout problem under the true power curve, with installation costs
$C_c = 0$ as throughout the paper. Algorithm~\ref{alg:bnc} adds
finitely many cuts, every relaxation the solver works on contains
$\mathcal{M}$, and at termination $E^\star \le E^{\mathrm{opt}} \le
\text{bound}$; with $\varepsilon = 0$ and the solver's tolerances set to
zero, $E^\star = E^{\mathrm{opt}}$.
With $\varepsilon > 0$ and the solver's default tolerances, as in
our runs, $E^\star \le E^{\mathrm{opt}}$ holds as stated, $E^\star$
being the oracle value of a feasible layout, and $E^{\mathrm{opt}} \le
\text{bound}$ holds up to the solver's feasibility and optimality
tolerances, every cut being valid for the optimal point in exact
arithmetic; at any termination, time limit included, $E^{\mathrm{opt}}
- E^\star \le \text{bound} - E^\star$, which, relative to $E^\star$, is
the gap Table~4 of the paper reports. The tolerance $\varepsilon$ enters only the solver's own
incumbent: a candidate the callback does not flag has objective above
its exact energy by at most $\varepsilon$ per installed cell and
scenario, hence, the scenario probabilities summing to one, by at most
$|X|\,\varepsilon \le N_{\max}\,\varepsilon$ ($0.026$~kW on the largest
instance); it decides when the search stops and whether a posted layout
improves on the incumbent, never the certificate.
\end{proposition}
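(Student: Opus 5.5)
The plan is to prove the three claims in sequence: the relaxations contain $\mathcal{M}$, the number of cuts is finite, and the bounds hold at termination.

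\emph{Containment.} The master's constraints are the packing and cardinality rows, the caps \eqref{eq:master-cap}, and the band inequalities built on the true curve. For every point of $\mathcal{M}$ these hold. The band inequalities follow from Proposition~\ref{prop:bi} with $\widetilde P = P$, applied with $\epivar^s_c \le p^{\mathrm{ex}}_c(x)\,x_c$ in the role of $z^s_c \le \widetilde P(w^s_c)$ at the maximal field. Every lazy cut added is an upwind cut, whichever $U$ is chosen, including the fallback. By Proposition~\ref{prop:cone} it is satisfied by every binary point whose hypographs are bounded by exact powers, which is exactly $\mathcal{M}$. Each node relaxation is the master, plus added cuts, plus branching bounds on a node's box. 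By induction on the cuts added, the union over open nodes therefore always contains $\mathcal{M}$, and so the solver's global bound is at least $E^{\mathrm{opt}}$.

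\emph{Finite termination.} The layouts $X$ are finitely many, and so are the pairs $(s,c)$. A cut is added only when a candidate has $\epivar^s_c > p^{\mathrm{ex}}_c(X)+\varepsilon$. The constant $q$ it carries satisfies $q \le p^{\mathrm{ex}}_c(X)+\varepsilon$, either by the test on line~\ref{ln:fallback} or by the fallback. The cut is built with $U \subseteq X\setminus\{c\}$, so once added it forces $\epivar^s_c \le q$ at layout $X$. The same triple $(X,s,c)$ can therefore never be flagged again, and at most $2^{|\mathcal{A}|}|\mathcal{S}||\mathcal{A}|$ cuts are generated. With finitely many cuts, the solver's branch and bound over finitely many binaries terminates.

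\emph{Bounds.} The inequality $E^\star \le E^{\mathrm{opt}}$ is immediate, since $E^\star$ is the oracle value of a layout satisfying \eqref{eq:mip-dist}. Now take $\varepsilon=0$ and zero tolerances. Any integer candidate the solver accepts has passed the callback unflagged, so its objective is at most its exact energy, which is at most $E^\star$. The solver terminates when bound $=$ incumbent value, which is at most $E^\star$, and hence $E^{\mathrm{opt}} \le \text{bound} \le E^\star$. With $\varepsilon>0$, an unflagged candidate exceeds its exact energy by at most $\varepsilon$ per pair $(s,c)$ with $c\in X$. Weighting by $p_s$ gives at most $|X|\varepsilon \le N_{\max}\varepsilon$. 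Since certification uses $E^\star$ and the bound, never the solver's incumbent, the gap statement holds at any termination. Tolerance effects enter only through the solver's feasibility and optimality checks, and I would state this rather than prove it.

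The main obstacle is the step that says every relaxation contains $\mathcal{M}$ under branching with lazy cuts. Lazy cuts are added locally at incumbent candidates, and I must argue they are globally valid. They are, because Proposition~\ref{prop:cone} holds for every $U$, independently of the node. I would state this explicitly and treat any solver-internal pruning as respecting globally valid constraints.
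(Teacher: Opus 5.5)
Your proposal is correct and follows the paper's proof step for step: containment of $\mathcal{M}$ via Propositions~\ref{prop:bi} and~\ref{prop:cone}, $E^\star \le E^{\mathrm{opt}}$ because every scored layout is feasible, unflagged candidates valued at most $E(X) \le E^\star$ when $\varepsilon = 0$, and finiteness because the fallback forces $q \le p^{\mathrm{ex}}_c(X) + \varepsilon$, so no triple $(X,s,c)$ is flagged twice. The one point the paper handles that you defer is the tolerance version of that last step: with feasibility tolerance $\tau_F$, re-flagging is excluded only while $q - p^{\mathrm{ex}}_c(X) \le \varepsilon - \tau_F$, which the paper supports with the measured excess, and it notes that in exact arithmetic, with positive stencil weight on the footprint, Lemma~\ref{lem:support} gives $q = p^{\mathrm{ex}}_c(X)$, so the fallback never fires.
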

\begin{proof}
Every constraint of the master and every upwind cut is satisfied by
every point of $\mathcal{M}$ (Propositions~4 and 6 of the paper), so the solver's dual bound is an upper bound on
$E^{\mathrm{opt}}$ throughout, and the optimal point
$(\mathbf{1}_{X^{\mathrm{opt}}}, p^{\mathrm{ex}}(X^{\mathrm{opt}}))$ is
never cut off. Each posted point lies in $\mathcal{M}$, so $E^\star$,
the largest exact value seen, satisfies $E^\star \le E^{\mathrm{opt}}$.
A candidate $(x, \epivar)$ at which no pair is flagged has $\epivar^s_c
\le p^{\mathrm{ex}}_c(X) + \varepsilon$ everywhere (and $\epivar^s_c =
0$ off $X$ by (10)), so with $\varepsilon = 0$ it lies
in $\mathcal{M}$ and its objective is at most $E(X) \le E^\star$ after
the update: the solver's incumbent value never exceeds $E^\star$, and
the search cannot stop with the optimum unvisited. For finiteness, the
cut added for a flagged $(X, s, c)$ has $(U, q)$ determined by $X$
with $q \le p^{\mathrm{ex}}_c(X) + \varepsilon$: whenever the
computed $q$ exceeds $p^{\mathrm{ex}}_c(X) + \varepsilon$, the fallback
step of Algorithm~\ref{alg:bnc} emits instead the cut with $U = X
\setminus \{c\}$, whose constant is $p^{\mathrm{ex}}_c(X)$ by
construction (in exact arithmetic, and whenever every offset of the
footprint carries positive weight, this never happens: the support $U$
of Algorithm~\ref{alg:bnc} then contains $X \cap \mathrm{Anc}_s(c)$, and
Lemma~\ref{lem:support} applied to $X$ and to $U \cup \{c\}$ gives
$w^{*}_c(U \cup \{c\}) = w^{*}_c(X)$, hence $q = p^{\mathrm{ex}}_c(X)$),
and at $x = \mathbf{1}_X$ it reads $\epivar^s_c \le q$ $\le
p^{\mathrm{ex}}_c(X) + \varepsilon$, so the same triple is never
flagged twice  : the
solver enforces the cut to within its feasibility tolerance $\tau_F =
10^{-6}$, so a second flag would need $\epivar^s_c > p^{\mathrm{ex}}_c(X)
+ \varepsilon$ with $\epivar^s_c \le q + \tau_F$, impossible as long as
$q - p^{\mathrm{ex}}_c(X) \le \varepsilon - \tau_F$; the measured excess
never exceeds $1.1\cdot 10^{-9}$~kW
(Remark~\ref{rem:support}), six orders of magnitude inside that margin: at most
$|\mathcal{A}|\,|\mathcal{S}|$ cuts per layout,
finitely many layouts, and a branch-and-bound over finitely many binaries
with finitely many lazy constraints terminates.
\end{proof}

\clearpage
\section{The campaign tables, seed by seed}
\label{sm:seeds}

Every run of the computational campaign of the paper was repeated five
times with different Gurobi random seeds (\texttt{Seed} $= 1, \dots, 5$);
the paper reports, for each instance, the median run of the five---the run
whose certificate is the median there, with the solve time breaking the
tie where all five close---so that every row of every table is a single
reproducible run rather than a column-wise average. This
section reproduces Tables~2, 4, 5 and~6 of
the paper for each seed, with the same scripts and the same formatting,
the five seeds one below the other, and closes with the dispersion across
seeds per instance; the row the paper reports is one of the five shown
here. Table~3 of the paper (the root-relaxation study) is
deterministic and is not repeated here.

\subsection{Table 2: scalability of the monolithic model}
\begin{center}\small
\begin{longtable}{lrrrrrrrrrrr}
\caption{Table~2 of the paper (scalability of the monolithic model, $3600$~s time limit), for each Gurobi seed.}\\
\toprule
grid & $N_{\max}$ & $n_t$ & bin & cont & rows & $t$\,[s] & gap & nodes & UB & AEP\,[kW] & cert \\
\midrule
\endfirsthead
\toprule
grid & $N_{\max}$ & $n_t$ & bin & cont & rows & $t$\,[s] & gap & nodes & UB & AEP\,[kW] & cert \\
\midrule
\endhead
\multicolumn{12}{l}{\emph{Seed 1}} \\
6$\times$6 & 8 & 4 & 36 & 1440 & 876 & 7.4 & 0.00\% & 3\,711 & 10777 & 10598 & 1.70\% \\
7$\times$7 & 9 & 6 & 49 & 1960 & 1\,197 & 17.7 & 0.00\% & 8\,874 & 14629 & 14347 & 1.97\% \\
8$\times$8 & 10 & 8 & 64 & 2560 & 1\,568 & 105.8 & 0.00\% & 32\,235 & 18391 & 18022 & 2.05\% \\
10$\times$10 & 12 & 11 & 100 & 4000 & 2\,460 & 3600.0 & 1.72\% & 385\,718 & 26699 & 25689 & 3.93\% \\
12$\times$12 & 14 & 14 & 144 & 5760 & 3\,552 & 3600.0 & 15.64\% & 136\,811 & 38815 & 32915 & 17.93\% \\
14$\times$14 & 16 & 16 & 196 & 7840 & 4\,844 & 3600.0 & 19.22\% & 96\,626 & 44778 & 36760 & 21.81\% \\
16$\times$16 & 18 & 18 & 256 & 10240 & 6\,336 & 3600.0 & 19.28\% & 59\,933 & 50117 & 41134 & 21.84\% \\
20$\times$20 & 22 & 22 & 400 & 16000 & 9\,920 & 3600.0 & 24.73\% & 28\,291 & 61245 & 48042 & 27.48\% \\
24$\times$24 & 26 & 14 & 576 & 23040 & 14\,304 & 3600.0 & 131.39\% & 7\,434 & 72365 & 33164 & 118.21\% \\
\midrule
\multicolumn{12}{l}{\emph{Seed 2}} \\
6$\times$6 & 8 & 4 & 36 & 1440 & 876 & 12.4 & 0.00\% & 10\,316 & 10777 & 10598 & 1.70\% \\
7$\times$7 & 9 & 6 & 49 & 1960 & 1\,197 & 21.4 & 0.00\% & 9\,771 & 14629 & 14347 & 1.97\% \\
8$\times$8 & 10 & 8 & 64 & 2560 & 1\,568 & 226.3 & 0.00\% & 89\,723 & 18391 & 18022 & 2.05\% \\
10$\times$10 & 12 & 11 & 100 & 4000 & 2\,460 & 2841.1 & 0.00\% & 319\,260 & 26248 & 25689 & 2.18\% \\
12$\times$12 & 14 & 14 & 144 & 5760 & 3\,552 & 3600.0 & 19.71\% & 123\,296 & 38958 & 31852 & 22.31\% \\
14$\times$14 & 16 & 16 & 196 & 7840 & 4\,844 & 3600.0 & 20.41\% & 81\,107 & 44656 & 36264 & 23.14\% \\
16$\times$16 & 18 & 18 & 256 & 10240 & 6\,336 & 3600.0 & 16.20\% & 66\,450 & 50128 & 42223 & 18.72\% \\
20$\times$20 & 22 & 22 & 400 & 16000 & 9\,920 & 3600.0 & 30.13\% & 29\,681 & 61242 & 46043 & 33.01\% \\
24$\times$24 & 26 & 22 & 576 & 23040 & 14\,304 & 3600.0 & 39.52\% & 8\,520 & 72461 & 50847 & 42.51\% \\
\midrule
\multicolumn{12}{l}{\emph{Seed 3}} \\
6$\times$6 & 8 & 4 & 36 & 1440 & 876 & 20.4 & 0.00\% & 19\,564 & 10777 & 10598 & 1.70\% \\
7$\times$7 & 9 & 6 & 49 & 1960 & 1\,197 & 17.3 & 0.00\% & 7\,676 & 14629 & 14347 & 1.97\% \\
8$\times$8 & 10 & 8 & 64 & 2560 & 1\,568 & 225.5 & 0.00\% & 73\,356 & 18391 & 18022 & 2.05\% \\
10$\times$10 & 12 & 11 & 100 & 4000 & 2\,460 & 3600.0 & 0.33\% & 402\,008 & 26336 & 25689 & 2.52\% \\
12$\times$12 & 14 & 14 & 144 & 5760 & 3\,552 & 3600.0 & 15.62\% & 128\,429 & 38808 & 32915 & 17.91\% \\
14$\times$14 & 16 & 16 & 196 & 7840 & 4\,844 & 3600.0 & 16.76\% & 88\,023 & 44714 & 37491 & 19.27\% \\
16$\times$16 & 18 & 18 & 256 & 10240 & 6\,336 & 3600.0 & 19.11\% & 68\,457 & 50282 & 41312 & 21.71\% \\
20$\times$20 & 22 & 22 & 400 & 16000 & 9\,920 & 3600.0 & 17.12\% & 25\,494 & 61472 & 51335 & 19.75\% \\
24$\times$24 & 26 & 14 & 576 & 23040 & 14\,304 & 3600.0 & 170.93\% & 7\,682 & 72391 & 33298 & 117.41\% \\
\midrule
\multicolumn{12}{l}{\emph{Seed 4}} \\
6$\times$6 & 8 & 4 & 36 & 1440 & 876 & 9.5 & 0.00\% & 3\,591 & 10777 & 10598 & 1.70\% \\
7$\times$7 & 9 & 6 & 49 & 1960 & 1\,197 & 19.6 & 0.00\% & 8\,900 & 14629 & 14347 & 1.97\% \\
8$\times$8 & 10 & 8 & 64 & 2560 & 1\,568 & 127.6 & 0.00\% & 38\,707 & 18391 & 18022 & 2.05\% \\
10$\times$10 & 12 & 11 & 100 & 4000 & 2\,460 & 3111.8 & 0.00\% & 381\,434 & 26248 & 25689 & 2.18\% \\
12$\times$12 & 14 & 14 & 144 & 5760 & 3\,552 & 3600.0 & 16.27\% & 125\,227 & 38738 & 32608 & 18.80\% \\
14$\times$14 & 16 & 16 & 196 & 7840 & 4\,844 & 3600.0 & 17.49\% & 91\,839 & 44619 & 37173 & 20.03\% \\
16$\times$16 & 18 & 18 & 256 & 10240 & 6\,336 & 3600.0 & 17.79\% & 57\,915 & 50074 & 41618 & 20.32\% \\
20$\times$20 & 22 & 22 & 400 & 16000 & 9\,920 & 3600.0 & 18.00\% & 25\,208 & 61241 & 50923 & 20.26\% \\
24$\times$24 & 26 & 19 & 576 & 23040 & 14\,304 & 3600.0 & 77.75\% & 8\,489 & 72373 & 43038 & 68.16\% \\
\midrule
\multicolumn{12}{l}{\emph{Seed 5}} \\
6$\times$6 & 8 & 4 & 36 & 1440 & 876 & 7.0 & 0.00\% & 3\,483 & 10777 & 10598 & 1.70\% \\
7$\times$7 & 9 & 6 & 49 & 1960 & 1\,197 & 21.6 & 0.00\% & 8\,951 & 14629 & 14347 & 1.97\% \\
8$\times$8 & 10 & 8 & 64 & 2560 & 1\,568 & 173.5 & 0.00\% & 60\,741 & 18391 & 18022 & 2.05\% \\
10$\times$10 & 12 & 11 & 100 & 4000 & 2\,460 & 3471.5 & 0.00\% & 453\,348 & 26248 & 25689 & 2.18\% \\
12$\times$12 & 14 & 14 & 144 & 5760 & 3\,552 & 3600.0 & 17.12\% & 106\,557 & 38859 & 32438 & 19.79\% \\
14$\times$14 & 16 & 16 & 196 & 7840 & 4\,844 & 3600.0 & 17.67\% & 90\,262 & 44617 & 37121 & 20.20\% \\
16$\times$16 & 18 & 18 & 256 & 10240 & 6\,336 & 3600.0 & 18.03\% & 63\,048 & 50212 & 41650 & 20.56\% \\
20$\times$20 & 22 & 22 & 400 & 16000 & 9\,920 & 3600.0 & 24.14\% & 23\,575 & 61215 & 48309 & 26.72\% \\
24$\times$24 & 26 & 22 & 576 & 23040 & 14\,304 & 3600.0 & 40.98\% & 7\,775 & 72373 & 50324 & 43.81\% \\
\bottomrule
\end{longtable}
\end{center} 

\clearpage
\subsection{Table 4: the band inequalities inside the search}
\begin{center}\small
\begin{longtable}{lrrrrrrr}
\caption{Table~4 of the paper (the band inequalities inside the search: dual bound, certificate on the exact energy scale and nodes without and with them; where an instance closes, the certificate column gives the time to prove it, in italics), for each Gurobi seed.}\\
\toprule
& & \multicolumn{3}{c}{none} & \multicolumn{3}{c}{$+$BI} \\
\cmidrule(lr){3-5}\cmidrule(lr){6-8}
grid & $N_{\max}$ & bound & cert\,/\,$t$ & nodes & bound & cert\,/\,$t$ & nodes \\
\midrule
\endfirsthead
\toprule
& & \multicolumn{3}{c}{none} & \multicolumn{3}{c}{$+$BI} \\
\cmidrule(lr){3-5}\cmidrule(lr){6-8}
grid & $N_{\max}$ & bound & cert\,/\,$t$ & nodes & bound & cert\,/\,$t$ & nodes \\
\midrule
\endhead
\multicolumn{8}{l}{\emph{Seed 1}} \\
$6\times6$ & 8 & 10777 & \emph{7\,s} & 3\,711 & 10777 & \emph{9\,s} & 3\,157 \\
$7\times7$ & 9 & 14629 & \emph{18\,s} & 8\,874 & 14629 & \emph{127\,s} & 47\,673 \\
$8\times8$ & 10 & 18391 & \emph{106\,s} & 32\,235 & 18391 & \emph{185\,s} & 49\,953 \\
$10\times10$ & 12 & 26699 & 3.93\% & 385\,718 & 26248 & \emph{1060\,s} & 129\,886 \\
$12\times12$ & 14 & 38815 & 17.93\% & 136\,811 & 36725 & 12.61\% & 59\,448 \\
$14\times14$ & 16 & 44778 & 21.81\% & 96\,626 & 42274 & 14.44\% & 36\,584 \\
$16\times16$ & 18 & 50117 & 21.84\% & 59\,933 & 47838 & 19.95\% & 29\,086 \\
$20\times20$ & 22 & 61245 & 27.48\% & 28\,291 & 59100 & 21.05\% & 12\,825 \\
$24\times24$ & 26 & 72365 & 118.21\% & 7\,434 & 70393 & 59.55\% & 6\,974 \\
\midrule
\multicolumn{8}{l}{\emph{Seed 2}} \\
$6\times6$ & 8 & 10777 & \emph{12\,s} & 10\,316 & 10777 & \emph{29\,s} & 5\,281 \\
$7\times7$ & 9 & 14629 & \emph{22\,s} & 9\,771 & 14629 & \emph{157\,s} & 57\,200 \\
$8\times8$ & 10 & 18391 & \emph{226\,s} & 89\,723 & 18391 & \emph{197\,s} & 51\,986 \\
$10\times10$ & 12 & 26248 & \emph{2841\,s} & 319\,260 & 26248 & \emph{1132\,s} & 135\,081 \\
$12\times12$ & 14 & 38958 & 22.31\% & 123\,296 & 36776 & 13.29\% & 50\,540 \\
$14\times14$ & 16 & 44656 & 23.14\% & 81\,107 & 42274 & 14.70\% & 40\,174 \\
$16\times16$ & 18 & 50128 & 18.72\% & 66\,450 & 47838 & 12.18\% & 27\,659 \\
$20\times20$ & 22 & 61242 & 33.01\% & 29\,681 & 59100 & 24.88\% & 11\,234 \\
$24\times24$ & 26 & 72461 & 42.51\% & 8\,520 & 70393 & 46.55\% & 6\,308 \\
\midrule
\multicolumn{8}{l}{\emph{Seed 3}} \\
$6\times6$ & 8 & 10777 & \emph{21\,s} & 19\,564 & 10777 & \emph{28\,s} & 8\,643 \\
$7\times7$ & 9 & 14629 & \emph{17\,s} & 7\,676 & 14629 & \emph{171\,s} & 58\,521 \\
$8\times8$ & 10 & 18391 & \emph{226\,s} & 73\,356 & 18391 & \emph{379\,s} & 120\,993 \\
$10\times10$ & 12 & 26336 & 2.52\% & 402\,008 & 26248 & \emph{1185\,s} & 152\,822 \\
$12\times12$ & 14 & 38808 & 17.91\% & 128\,429 & 36781 & 13.94\% & 59\,711 \\
$14\times14$ & 16 & 44714 & 19.27\% & 88\,023 & 42274 & 13.22\% & 29\,728 \\
$16\times16$ & 18 & 50282 & 21.71\% & 68\,457 & 47838 & 26.59\% & 30\,381 \\
$20\times20$ & 22 & 61472 & 19.75\% & 25\,494 & 59100 & 37.58\% & 9\,712 \\
$24\times24$ & 26 & 72391 & 117.41\% & 7\,682 & 70393 & 54.02\% & 6\,257 \\
\midrule
\multicolumn{8}{l}{\emph{Seed 4}} \\
$6\times6$ & 8 & 10777 & \emph{10\,s} & 3\,591 & 10777 & \emph{13\,s} & 4\,488 \\
$7\times7$ & 9 & 14629 & \emph{20\,s} & 8\,900 & 14629 & \emph{129\,s} & 46\,392 \\
$8\times8$ & 10 & 18391 & \emph{128\,s} & 38\,707 & 18391 & \emph{218\,s} & 60\,544 \\
$10\times10$ & 12 & 26248 & \emph{3112\,s} & 381\,434 & 26248 & \emph{2251\,s} & 252\,261 \\
$12\times12$ & 14 & 38738 & 18.80\% & 125\,227 & 36677 & 12.90\% & 42\,824 \\
$14\times14$ & 16 & 44619 & 20.03\% & 91\,839 & 42274 & 13.88\% & 41\,457 \\
$16\times16$ & 18 & 50074 & 20.32\% & 57\,915 & 47838 & 13.19\% & 22\,695 \\
$20\times20$ & 22 & 61241 & 20.26\% & 25\,208 & 59100 & 25.09\% & 11\,450 \\
$24\times24$ & 26 & 72373 & 68.16\% & 8\,489 & 70393 & 56.73\% & 6\,266 \\
\midrule
\multicolumn{8}{l}{\emph{Seed 5}} \\
$6\times6$ & 8 & 10777 & \emph{7\,s} & 3\,483 & 10777 & \emph{52\,s} & 16\,302 \\
$7\times7$ & 9 & 14629 & \emph{22\,s} & 8\,951 & 14629 & \emph{297\,s} & 120\,690 \\
$8\times8$ & 10 & 18391 & \emph{174\,s} & 60\,741 & 18391 & \emph{151\,s} & 43\,105 \\
$10\times10$ & 12 & 26248 & \emph{3472\,s} & 453\,348 & 26248 & \emph{1520\,s} & 236\,754 \\
$12\times12$ & 14 & 38859 & 19.79\% & 106\,557 & 36747 & 13.11\% & 60\,711 \\
$14\times14$ & 16 & 44617 & 20.20\% & 90\,262 & 42274 & 12.27\% & 40\,231 \\
$16\times16$ & 18 & 50212 & 20.56\% & 63\,048 & 47838 & 24.82\% & 19\,843 \\
$20\times20$ & 22 & 61215 & 26.72\% & 23\,575 & 59100 & 27.48\% & 11\,694 \\
$24\times24$ & 26 & 72373 & 43.81\% & 7\,775 & 70393 & 45.46\% & 6\,314 \\
\bottomrule
\end{longtable}
\end{center} 

\clearpage
\subsection{Table 5: branch-and-check}
\begin{center}\small
\begin{longtable}{lrrrrrrrrr}
\caption{Table~5 of the paper (branch-and-check, one row per instance), for each Gurobi seed.}\\
\toprule
grid & $N_{\max}$ & root & bound & best & $n_t$ & gap & nodes & cuts & $t$ [s] \\
\midrule
\endfirsthead
\toprule
grid & $N_{\max}$ & root & bound & best & $n_t$ & gap & nodes & cuts & $t$ [s] \\
\midrule
\endhead
\multicolumn{10}{l}{\emph{Seed 1}} \\
$6\times6$ & 8 & 10643 & 10598 & 10598 & 4 & 0.00\% & 120 & 82 & 0.2 \\
$7\times7$ & 9 & 16022 & 14347 & 14347 & 6 & 0.00\% & 483 & 483 & 0.9 \\
$8\times8$ & 10 & 20273 & 18022 & 18022 & 8 & 0.00\% & 2\,870 & 1\,475 & 3.0 \\
$10\times10$ & 12 & 29604 & 25689 & 25689 & 11 & 0.00\% & 48\,090 & 7\,113 & 78 \\
$12\times12$ & 14 & 36262 & 35411 & 32793 & 14 & 7.98\% & 71\,504 & 43\,860 & 3600 \\
$14\times14$ & 16 & 41474 & 41083 & 37876 & 16 & 8.47\% & 49\,962 & 56\,246 & 3600 \\
$16\times16$ & 18 & 46957 & 46856 & 42573 & 18 & 10.06\% & 46\,908 & 58\,729 & 3601 \\
$20\times20$ & 22 & 58075 & 58075 & 53036 & 22 & 9.50\% & 46\,207 & 71\,240 & 3600 \\
$24\times24$ & 26 & 69234 & 69234 & 63429 & 26 & 9.15\% & 65\,156 & 90\,091 & 3601 \\
\midrule
\multicolumn{10}{l}{\emph{Seed 2}} \\
$6\times6$ & 8 & 10643 & 10598 & 10598 & 4 & 0.00\% & 97 & 87 & 0.2 \\
$7\times7$ & 9 & 16022 & 14347 & 14347 & 6 & 0.00\% & 545 & 506 & 0.9 \\
$8\times8$ & 10 & 20273 & 18022 & 18022 & 8 & 0.00\% & 2\,742 & 1\,388 & 2.6 \\
$10\times10$ & 12 & 29604 & 25689 & 25689 & 11 & 0.00\% & 46\,200 & 7\,069 & 76 \\
$12\times12$ & 14 & 36262 & 35411 & 32915 & 14 & 7.59\% & 78\,727 & 42\,021 & 3600 \\
$14\times14$ & 16 & 41474 & 41083 & 37743 & 16 & 8.85\% & 48\,328 & 56\,895 & 3600 \\
$16\times16$ & 18 & 46957 & 46856 & 42591 & 18 & 10.02\% & 49\,466 & 59\,560 & 3600 \\
$20\times20$ & 22 & 58075 & 58075 & 52866 & 22 & 9.85\% & 44\,195 & 75\,387 & 3601 \\
$24\times24$ & 26 & 69234 & 69234 & 64017 & 26 & 8.15\% & 39\,987 & 90\,467 & 3601 \\
\midrule
\multicolumn{10}{l}{\emph{Seed 3}} \\
$6\times6$ & 8 & 10643 & 10598 & 10598 & 4 & 0.00\% & 121 & 94 & 0.2 \\
$7\times7$ & 9 & 16022 & 14347 & 14347 & 6 & 0.00\% & 466 & 434 & 0.8 \\
$8\times8$ & 10 & 20273 & 18022 & 18022 & 8 & 0.00\% & 2\,717 & 1\,385 & 2.6 \\
$10\times10$ & 12 & 29604 & 25689 & 25689 & 11 & 0.00\% & 44\,626 & 7\,294 & 75 \\
$12\times12$ & 14 & 36262 & 35411 & 32753 & 14 & 8.12\% & 72\,384 & 43\,077 & 3600 \\
$14\times14$ & 16 & 41474 & 41083 & 37878 & 16 & 8.46\% & 55\,764 & 59\,182 & 3600 \\
$16\times16$ & 18 & 46957 & 46856 & 42885 & 18 & 9.26\% & 48\,846 & 65\,055 & 3601 \\
$20\times20$ & 22 & 58075 & 58075 & 52873 & 22 & 9.84\% & 47\,472 & 78\,538 & 3600 \\
$24\times24$ & 26 & 69234 & 69234 & 63321 & 26 & 9.34\% & 38\,439 & 83\,043 & 3600 \\
\midrule
\multicolumn{10}{l}{\emph{Seed 4}} \\
$6\times6$ & 8 & 10643 & 10598 & 10598 & 4 & 0.00\% & 97 & 87 & 0.2 \\
$7\times7$ & 9 & 16022 & 14347 & 14347 & 6 & 0.00\% & 472 & 473 & 0.8 \\
$8\times8$ & 10 & 20273 & 18022 & 18022 & 8 & 0.00\% & 2\,644 & 1\,365 & 2.5 \\
$10\times10$ & 12 & 29604 & 25689 & 25689 & 11 & 0.00\% & 45\,091 & 7\,089 & 79 \\
$12\times12$ & 14 & 36262 & 35411 & 32793 & 14 & 7.98\% & 73\,839 & 40\,567 & 3600 \\
$14\times14$ & 16 & 41474 & 41083 & 37867 & 16 & 8.49\% & 46\,007 & 54\,383 & 3600 \\
$16\times16$ & 18 & 46957 & 46856 & 42884 & 18 & 9.26\% & 44\,468 & 58\,999 & 3600 \\
$20\times20$ & 22 & 58075 & 58075 & 52660 & 22 & 10.28\% & 64\,288 & 74\,564 & 3601 \\
$24\times24$ & 26 & 69234 & 69234 & 63268 & 26 & 9.43\% & 68\,376 & 88\,749 & 3601 \\
\midrule
\multicolumn{10}{l}{\emph{Seed 5}} \\
$6\times6$ & 8 & 10643 & 10598 & 10598 & 4 & 0.00\% & 109 & 88 & 0.2 \\
$7\times7$ & 9 & 16022 & 14347 & 14347 & 6 & 0.00\% & 532 & 414 & 0.7 \\
$8\times8$ & 10 & 20273 & 18022 & 18022 & 8 & 0.00\% & 2\,655 & 1\,364 & 2.6 \\
$10\times10$ & 12 & 29604 & 25689 & 25689 & 11 & 0.00\% & 46\,111 & 6\,805 & 77 \\
$12\times12$ & 14 & 36262 & 35411 & 32793 & 14 & 7.98\% & 83\,394 & 42\,728 & 3600 \\
$14\times14$ & 16 & 41474 & 41083 & 37812 & 16 & 8.65\% & 49\,993 & 57\,126 & 3600 \\
$16\times16$ & 18 & 46957 & 46856 & 42698 & 18 & 9.74\% & 45\,879 & 60\,032 & 3600 \\
$20\times20$ & 22 & 58075 & 58075 & 52954 & 22 & 9.67\% & 51\,415 & 79\,893 & 3601 \\
$24\times24$ & 26 & 69234 & 69234 & 63811 & 26 & 8.50\% & 42\,933 & 90\,563 & 3600 \\
\bottomrule
\end{longtable}
\end{center} 

\clearpage
\subsection{Table 6: incumbent posting}
\begin{center}\small\setlength{\tabcolsep}{4pt}
\begin{longtable}{lrrrrrrrrrrr}
\caption{Table~6 of the paper (incumbent posting), for each Gurobi seed: on the left the exact incumbent is tracked outside the solver, on the right it is posted; $3600$~s time limit, and a dagger marks a run stopped by it. Posting wins 19 of the 25 (instance, seed) pairs, ties 5 and loses 1, where a win is a shorter time to prove optimality when both runs close, and a lower final gap otherwise; differences below $0.5$~s or $0.01$ gap points count as ties.}\\
\toprule
& & \multicolumn{5}{c}{tracked outside} & \multicolumn{5}{c}{posted} \\
\cmidrule(lr){3-7}\cmidrule(lr){8-12}
grid & seed & bound & exact & gap & $t$\,[s] & nodes & bound & exact & gap & $t$\,[s] & nodes \\
\midrule
\endfirsthead
\toprule
& & \multicolumn{5}{c}{tracked outside} & \multicolumn{5}{c}{posted} \\
\cmidrule(lr){3-7}\cmidrule(lr){8-12}
grid & seed & bound & exact & gap & $t$\,[s] & nodes & bound & exact & gap & $t$\,[s] & nodes \\
\midrule
\endhead
$8\times8$ & 1 & 18022 & 18022 & 0.00\% & 3.0 & 2\,839 & 18022 & 18022 & 0.00\% & 2.8 & 2\,870 \\
$8\times8$ & 2 & 18022 & 18022 & 0.00\% & 3.0 & 2\,839 & 18022 & 18022 & 0.00\% & 2.6 & 2\,742 \\
$8\times8$ & 3 & 18022 & 18022 & 0.00\% & 3.5 & 3\,084 & 18022 & 18022 & 0.00\% & 2.6 & 2\,717 \\
$8\times8$ & 4 & 18022 & 18022 & 0.00\% & 2.9 & 2\,689 & 18022 & 18022 & 0.00\% & 2.5 & 2\,644 \\
$8\times8$ & 5 & 18022 & 18022 & 0.00\% & 2.9 & 2\,741 & 18022 & 18022 & 0.00\% & 2.8 & 2\,655 \\
\midrule
$10\times10$ & 1 & 25689 & 25689 & 0.00\% & 80 & 46\,379 & 25689 & 25689 & 0.00\% & 77 & 48\,090 \\
$10\times10$ & 2 & 25689 & 25689 & 0.00\% & 81 & 45\,269 & 25689 & 25689 & 0.00\% & 76 & 46\,200 \\
$10\times10$ & 3 & 25689 & 25689 & 0.00\% & 142 & 56\,133 & 25689 & 25689 & 0.00\% & 76 & 44\,626 \\
$10\times10$ & 4 & 25689 & 25689 & 0.00\% & 96 & 48\,034 & 25689 & 25689 & 0.00\% & 79 & 45\,091 \\
$10\times10$ & 5 & 25689 & 25689 & 0.00\% & 136 & 52\,345 & 25689 & 25689 & 0.00\% & 77 & 46\,111 \\
\midrule
$12\times12$ & 1 & 35411 & 32793 & 7.98\%$^{\dagger}$ & 3600 & 65\,379 & 35411 & 32793 & 7.98\%$^{\dagger}$ & 3600 & 71\,610 \\
$12\times12$ & 2 & 35411 & 32475 & 9.04\%$^{\dagger}$ & 3600 & 62\,711 & 35411 & 32915 & 7.59\%$^{\dagger}$ & 3600 & 78\,779 \\
$12\times12$ & 3 & 35411 & 32491 & 8.99\%$^{\dagger}$ & 3600 & 78\,088 & 35411 & 32753 & 8.12\%$^{\dagger}$ & 3600 & 72\,314 \\
$12\times12$ & 4 & 35411 & 32751 & 8.12\%$^{\dagger}$ & 3600 & 68\,214 & 35411 & 32793 & 7.98\%$^{\dagger}$ & 3600 & 73\,910 \\
$12\times12$ & 5 & 35411 & 32690 & 8.32\%$^{\dagger}$ & 3600 & 62\,261 & 35411 & 32793 & 7.98\%$^{\dagger}$ & 3600 & 83\,530 \\
\midrule
$14\times14$ & 1 & 41083 & 37835 & 8.58\%$^{\dagger}$ & 3600 & 55\,750 & 41083 & 37876 & 8.47\%$^{\dagger}$ & 3600 & 49\,962 \\
$14\times14$ & 2 & 41083 & 37497 & 9.56\%$^{\dagger}$ & 3600 & 47\,498 & 41083 & 37743 & 8.85\%$^{\dagger}$ & 3600 & 48\,239 \\
$14\times14$ & 3 & 41083 & 37650 & 9.12\%$^{\dagger}$ & 3600 & 51\,650 & 41083 & 37878 & 8.46\%$^{\dagger}$ & 3600 & 55\,730 \\
$14\times14$ & 4 & 41083 & 37990 & 8.14\%$^{\dagger}$ & 3600 & 48\,258 & 41083 & 37867 & 8.49\%$^{\dagger}$ & 3600 & 46\,007 \\
$14\times14$ & 5 & 41083 & 37449 & 9.70\%$^{\dagger}$ & 3600 & 55\,776 & 41083 & 37812 & 8.65\%$^{\dagger}$ & 3600 & 49\,993 \\
\midrule
$16\times16$ & 1 & 46856 & 42339 & 10.67\%$^{\dagger}$ & 3601 & 47\,844 & 46856 & 42573 & 10.06\%$^{\dagger}$ & 3601 & 46\,948 \\
$16\times16$ & 2 & 46856 & 42268 & 10.86\%$^{\dagger}$ & 3600 & 51\,194 & 46856 & 42591 & 10.02\%$^{\dagger}$ & 3600 & 49\,457 \\
$16\times16$ & 3 & 46856 & 42222 & 10.97\%$^{\dagger}$ & 3600 & 52\,001 & 46856 & 42885 & 9.26\%$^{\dagger}$ & 3601 & 48\,846 \\
$16\times16$ & 4 & 46856 & 42418 & 10.46\%$^{\dagger}$ & 3600 & 49\,148 & 46856 & 42884 & 9.26\%$^{\dagger}$ & 3600 & 44\,433 \\
$16\times16$ & 5 & 46856 & 42405 & 10.50\%$^{\dagger}$ & 3600 & 46\,431 & 46856 & 42698 & 9.74\%$^{\dagger}$ & 3600 & 45\,914 \\
\bottomrule
\end{longtable}
\end{center} 

\clearpage
\subsection{Dispersion across seeds}
\begin{table}[htbp]
\centering\small
\caption{Dispersion across the five seeds, per instance. For the monolithic
model without (\emph{none}) and with (\emph{$+$BI}) the band inequalities,
and for the branch-and-check (\emph{B\&C}), the entry is the range over
the five seeds of the certificate on the exact energy scale where the
instance does not close (of the final gap, for the branch-and-check,
whose bound already lives on that scale), and the
range of the time to prove optimality where it does; the last column is
the spread of the branch-and-check incumbent, $(\max - \min)/\min$ of its
exact energy over the seeds. A single value means that the five seeds
agree to the digits shown; a lower bound means that on at least one seed
the run ended with no incumbent, so its gap is unbounded.}
\begin{tabular}{lrrrr}
\toprule
grid & none & $+$BI & B\&C gap\,/\,$t$ & B\&C incumbent \\
\midrule
$6\times6$ & 7--21\,s & 9--52\,s & 0\,s & 0.00\% \\
$7\times7$ & 17--22\,s & 127--297\,s & 1\,s & 0.00\% \\
$8\times8$ & 106--226\,s & 151--379\,s & 2--3\,s & 0.00\% \\
$10\times10$ & 2.18--3.93\% & 1060--2251\,s & 75--79\,s & 0.00\% \\
$12\times12$ & 17.91--22.31\% & 12.61--13.94\% & 7.59--8.12\% & 0.49\% \\
$14\times14$ & 19.27--23.14\% & 12.27--14.70\% & 8.46--8.85\% & 0.36\% \\
$16\times16$ & 18.72--21.84\% & 12.18--26.59\% & 9.26--10.06\% & 0.73\% \\
$20\times20$ & 19.75--33.01\% & 21.05--37.58\% & 9.50--10.28\% & 0.71\% \\
$24\times24$ & 42.51--118.21\% & 45.46--59.55\% & 8.15--9.43\% & 1.18\% \\
\bottomrule
\end{tabular}
\end{table}


\begin{thebibliography}{99}

\bibitem[Angulo et~al.(2016)]{angulo2016}
Angulo, G., Ahmed, S., \& Dey, S.~S. (2016).
Improving the integer L-shaped method.
{\em INFORMS Journal on Computing}, 28(3), 483--499.

\bibitem[Archer et~al.(2011)]{archer2011}
Archer, R., Nates, G., Donovan, S., \& Waterer, H. (2011).
Wind turbine interference in a wind farm layout optimization mixed integer
linear programming model.
{\em Wind Engineering}, 35(2), 165--175.

\bibitem[Baker et~al.(2019)]{baker2019}
Baker, N.~F., Stanley, A.~P.~J., Thomas, J.~J., Ning, A., \& Dykes, K.
(2019).
Best practices for wake model and optimization algorithm selection in
wind farm layout optimization.
In {\em AIAA Scitech 2019 Forum} (AIAA 2019-0540). San Diego, CA.

\bibitem[Bastankhah and Port\'e-Agel(2014)]{bastankhah2014}
Bastankhah, M., \& Port\'e-Agel, F. (2014).
A new analytical model for wind-turbine wakes.
{\em Renewable Energy}, 70, 116--123.

\bibitem[Bastankhah et~al.(2021)]{bastankhah2021}
Bastankhah, M., Welch, B.~L., Mart\'inez-Tossas, L.~A., King, J., \&
Fleming, P. (2021).
Analytical solution for the cumulative wake of wind turbines in wind
farms.
{\em Journal of Fluid Mechanics}, 911, A53.

\bibitem[Beale and Tomlin(1970)]{beale1970}
Beale, E.~M.~L., \& Tomlin, J.~A. (1970).
Special facilities in a general mathematical programming system for
non-convex problems using ordered sets of variables.
In J.~Lawrence (Ed.), {\em Proceedings of the Fifth International
Conference on Operational Research} (pp. 447--454). Tavistock Publications.

\bibitem[Beck(2010)]{beck2010}
Beck, J.~C. (2010).
Checking-up on branch-and-check.
In {\em Principles and Practice of Constraint Programming -- CP 2010},
Lecture Notes in Computer Science 6308 (pp.~84--98). Springer.

\bibitem[Cao et~al.(2025)]{cao2025}
Cao, Z., Chen, L., Li, K., Huang, G., \& Liu, R. (2025).
A mathematical programming approach for joint optimization of wind farm
layout and cable routing based on a three-dimensional Gaussian wake
model.
{\em Wind Energy}, 28(2), e2960.

\bibitem[Cazzaro et~al.(2023)]{cazzaro2023}
Cazzaro, D., Koza, D.~F., \& Pisinger, D. (2023).
Combined layout and cable optimization of offshore wind farms.
{\em European Journal of Operational Research}, 311(1), 301--315.

\bibitem[Cazzaro and Pisinger(2022)]{cazzaro2022vns}
Cazzaro, D., \& Pisinger, D. (2022).
Variable neighborhood search for large offshore wind farm layout
optimization.
{\em Computers \& Operations Research}, 138, Article 105588.

\bibitem[Cazzaro et~al.(2022)]{cazzaro2022multiscale}
Cazzaro, D., Trivella, A., Corman, F., \& Pisinger, D. (2022).
Multi-scale optimization of the design of offshore wind farms.
{\em Applied Energy}, 314, Article 118830.

\bibitem[Codato and Fischetti(2006)]{codato2006}
Codato, G., \& Fischetti, M. (2006).
Combinatorial Benders' cuts for mixed-integer linear programming.
{\em Operations Research}, 54(4), 756--766.

\bibitem[Donovan(2005)]{donovan2005}
Donovan, S. (2005).
Wind farm optimization.
In {\em Proceedings of the 40th Annual ORSNZ Conference}.

\bibitem[Fagerfj\"all(2010)]{fagerfjall2010}
Fagerfj\"all, P. (2010).
{\em Optimizing wind farm layout---more bang for the buck using
mixed integer linear programming} [Master's thesis]. Chalmers University
of Technology.

\bibitem[Fischetti and Fischetti(2023)]{fischetti2023integrated}
Fischetti, Martina, \& Fischetti, Matteo (2023).
Integrated layout and cable routing in wind farm optimal design.
{\em Management Science}, 69(4), 2147--2164.

\bibitem[Fischetti and Fischetti(2026)]{repo}
Fischetti, Martina, \& Fischetti, Matteo (2026).
{\em Replication package for the present paper: model, exact evaluator,
instances, layouts, campaign outputs, seeds and table generators}
[Data set]. Zenodo. \url{https://doi.org/10.5281/zenodo.22017424}.

\bibitem[Fischetti et~al.(2020)]{fischetti2020vattenfall}
Fischetti, M., Kristoffersen, J.~R., Hjort, T., Monaci, M., \&
Pisinger, D. (2020).
Vattenfall optimizes offshore wind farm design.
{\em INFORMS Journal on Applied Analytics}, 50(1), 80--94.

\bibitem[Fischetti and Monaci(2014)]{fischetti2014proximity}
Fischetti, M., \& Monaci, M. (2014).
Proximity search for 0--1 mixed-integer convex programming.
{\em Journal of Heuristics}, 20(6), 709--731.

\bibitem[Fischetti and Monaci(2016)]{fischetti2016proximity}
Fischetti, M., \& Monaci, M. (2016).
Proximity search heuristics for wind farm optimal layout.
{\em Journal of Heuristics}, 22(4), 459--474.

\bibitem[Fischetti and Pisinger(2019)]{fischetti2019overview}
Fischetti, M., \& Pisinger, D. (2019).
Mathematical optimization and algorithms for offshore wind farm design:
an overview.
{\em Business \& Information Systems Engineering}, 61(4), 469--485.

\bibitem[Forbes et~al.(2024)]{forbes2024}
Forbes, M., Harris, M., Jansen, M., van der Schoot, F., \& Taimre, T.
(2024).
Combining optimisation and simulation using logic-based Benders
decomposition.
{\em European Journal of Operational Research}, 312(3), 840--854.

\bibitem[Frangioni and Gentile(2006)]{frangioni2006}
Frangioni, A., \& Gentile, C. (2006).
Perspective cuts for a class of convex 0--1 mixed integer programs.
{\em Mathematical Programming}, 106(2), 225--236.


\bibitem[Gnegel et~al.(2021)]{gnegel2021}
Gnegel, F., F\"ugenschuh, A., Hagel, M., Leyffer, S., \& Stiemer, M.
(2021).
A solution framework for linear PDE-constrained mixed-integer problems.
{\em Mathematical Programming}, 188(2), 695--728.

\bibitem[Grady et~al.(2005)]{grady2005}
Grady, S.~A., Hussaini, M.~Y., \& Abdullah, M.~M. (2005).
Placement of wind turbines using genetic algorithms.
{\em Renewable Energy}, 30(2), 259--270.

\bibitem[Guirguis et~al.(2016)]{guirguis2016}
Guirguis, D., Romero, D.~A., \& Amon, C.~H. (2016).
Toward efficient optimization of wind farm layouts: utilizing exact
gradient information.
{\em Applied Energy}, 179, 110--123.

\bibitem[G\"unl\"uk and Linderoth(2010)]{gunluk2010}
G\"unl\"uk, O., \& Linderoth, J. (2010).
Perspective reformulations of mixed integer nonlinear programs with
indicator variables.
{\em Mathematical Programming}, 124(1--2), 183--205.

\bibitem[Gurobi Optimization, LLC(2026)]{gurobi}
Gurobi Optimization, LLC (2026).
{\em Gurobi Optimizer Reference Manual} (Version 13.0).
\texttt{https://www.gurobi.com/}

\bibitem[Hijazi et~al.(2012)]{hijazi2012}
Hijazi, H., Bonami, P., Cornu\'ejols, G., \& Ouorou, A. (2012).
Mixed-integer nonlinear programs featuring ``on/off'' constraints.
{\em Computational Optimization and Applications}, 52(2), 537--558.

\bibitem[Hooker and Ottosson(2003)]{hooker2003}
Hooker, J.~N., \& Ottosson, G. (2003).
Logic-based Benders decomposition.
{\em Mathematical Programming}, 96, 33--60.

\bibitem[Hu et~al.(2026)]{hu2026}
Hu, J., Xu, J., Meng, W., Yang, Q., \& Grossmann, I.~E. (2026).
Integrated optimization of layout and cable routing for wind farms: From
MINLP to MIQCP formulation and bilevel decomposition method.
{\em Computers \& Chemical Engineering}, 214, Article 109814.

\bibitem[Jensen(1983)]{jensen1983}
Jensen, N.~O. (1983).
{\em A note on wind generator interaction} (Technical Report
Ris\o-M-2411). Ris\o\ National Laboratory.

\bibitem[Katic et~al.(1986)]{katic1986}
Katic, I., H\o jstrup, J., \& Jensen, N.~O. (1986).
A simple model for cluster efficiency.

In W.~Palz \& E.~Sesto (Eds.), {\em EWEC'86: Proceedings of the
European Wind Energy Association Conference and Exhibition, Rome, 6--8
October 1986} (Vol.~1, pp.~407--410). Rome: A.~Raguzzi (published 1987).

\bibitem[King et~al.(2017)]{king2017}
King, R.~N., Dykes, K., Graf, P., \& Hamlington, P.~E. (2017).
Optimization of wind plant layouts using an adjoint approach.
{\em Wind Energy Science}, 2(1), 115--131.

\bibitem[Kuo et~al.(2016)]{kuo2016}
Kuo, J.~Y.~J., Romero, D.~A., Beck, J.~C., \& Amon, C.~H. (2016).
Wind farm layout optimization on complex terrains---integrating a CFD wake
model with mixed-integer programming.
{\em Applied Energy}, 178, 404--414.

\bibitem[Lanzilao and Meyers(2022)]{lanzilao2022}
Lanzilao, L., \& Meyers, J. (2022).
A new wake-merging method for wind-farm power prediction in the presence
of heterogeneous background velocity fields.
{\em Wind Energy}, 25(2), 237--259.

\bibitem[Laporte and Louveaux(1993)]{laporte1993}
Laporte, G., \& Louveaux, F.~V. (1993).
The integer L-shaped method for stochastic integer programs with complete
recourse.
{\em Operations Research Letters}, 13(3), 133--142.

\bibitem[Lissaman(1979)]{lissaman1979}
Lissaman, P.~B.~S. (1979).
Energy effectiveness of arbitrary arrays of wind turbines.
{\em Journal of Energy}, 3(6), 323--328.

\bibitem[LoCascio et~al.(2024)]{locascio2024}
LoCascio, M.~J., Bay, C.~J., Mart\'inez-Tossas, L.~A., Thomas, J.~J.,
\& Gorl\'e, C. (2024).
FLOWERS AEP: an analytical model for wind farm layout optimization.
{\em Wind Energy}, 27, e2954.

\bibitem[Mosetti et~al.(1994)]{mosetti1994}
Mosetti, G., Poloni, C., \& Diviacco, B. (1994).
Optimization of wind turbine positioning in large windfarms by means of a
genetic algorithm.
{\em Journal of Wind Engineering and Industrial Aerodynamics}, 51(1),
105--116.

\bibitem[Parada et~al.(2024)]{parada2024}
Parada, L., Legault, R., C\^ot\'e, J.-F., \& Gendreau, M. (2024).
A disaggregated integer L-shaped method for stochastic vehicle routing
problems with monotonic recourse.
{\em European Journal of Operational Research}, 318(2), 520--533.

\bibitem[Pedersen et~al.(2026)]{pedersen2026}
Pedersen, J.~W., Lindner, N., Rehfeldt, D., \& Koch, T. (2026).
Integrated wind farm design: optimizing turbine placement and cable
routing with wake effects.
{\em OR Spectrum}. \url{https://doi.org/10.1007/s00291-026-00862-1}.

\bibitem[P\'erez-R\'ua et~al.(2023)]{perezrua2023}
P\'erez-R\'ua, J.-A., Stolpe, M., \& Cutululis, N.~A. (2023).
A neighborhood search integer programming approach for wind farm layout
optimization.
{\em Wind Energy Science}, 8(9), 1453--1473.

\bibitem[Quaeghebeur et~al.(2021)]{quaeghebeur2021}
Quaeghebeur, E., Bos, R., \& Zaaijer, M.~B. (2021).
Wind farm layout optimization using pseudo-gradients.
{\em Wind Energy Science}, 6(3), 815--839.

\bibitem[Rahmaniani et~al.(2017)]{rahmaniani2017}
Rahmaniani, R., Crainic, T.~G., Gendreau, M., \& Rei, W. (2017).
The Benders decomposition algorithm: A literature review.
{\em European Journal of Operational Research}, 259(3), 801--817.

\bibitem[Stanley and Ning(2019)]{stanley2019}
Stanley, A.~P.~J., \& Ning, A. (2019).
Massive simplification of the wind farm layout optimization problem.
{\em Wind Energy Science}, 4(4), 663--676.


\bibitem[Ulku and Alabas-Uslu(2019)]{ulku2019}
Ulku, I., \& Alabas-Uslu, C. (2019).
A new mathematical programming approach to wind farm layout problem under
multiple wake effects.
{\em Renewable Energy}, 136, 1190--1201.

\bibitem[Thorsteinsson(2001)]{thorsteinsson2001}
Thorsteinsson, E.~S. (2001).
Branch-and-check: a hybrid framework integrating mixed integer programming
and constraint logic programming.
In T.~Walsh (Ed.), {\em Principles and Practice of Constraint
Programming---CP 2001} (Lecture Notes in Computer Science, Vol. 2239,
pp. 16--30). Springer.

\bibitem[Turner et~al.(2014)]{turner2014}
Turner, S.~D.~O., Romero, D.~A., Zhang, P.~Y., Amon, C.~H., \&
Chan, T.~C.~Y. (2014).
A new mathematical programming approach to optimize wind farm layouts.
{\em Renewable Energy}, 63, 674--680.

\bibitem[Zhang et~al.(2014)]{zhang2014}
Zhang, P.~Y., Romero, D.~A., Beck, J.~C., \& Amon, C.~H. (2014).
Solving wind farm layout optimization with mixed integer programs and
constraint programs.
{\em EURO Journal on Computational Optimization}, 2, 195--219.

\bibitem[Zhang et~al.(2026)]{zhang2026}
Zhang, Z., He, J., Liu, Y., Li, Q., \& Kareem, A. (2026).
A new physically based model for cumulative wakes and application to
wind farms.
{\em Energy Conversion and Management}, 361, Article 121593.

\bibitem[Zong and Port\'e-Agel(2020)]{zong2020}
Zong, H., \& Port\'e-Agel, F. (2020).
A momentum-conserving wake superposition method for wind farm power
prediction.
{\em Journal of Fluid Mechanics}, 889, A8.

\end{thebibliography}
\end{document}